\theoremstyle{plain}
\newtheorem{theorem}{Theorem}[section]
\newtheorem{thm}[theorem]{Theorem}
\newtheorem{prop}[theorem]{Proposition}
\theoremstyle{definition}
\newtheorem{conj}[theorem]{Conjecture}
\newtheorem{defn}[theorem]{Definition}
\newtheorem{ex}[theorem]{Example}
\newtheorem{OQ}{Open Question}
\theoremstyle{remark}
\newtheorem{rem}[theorem]{Remark}
\newcommand{\R}{\mathbb{R}}
\newcommand{\C}{\mathbb{C}}
\newcommand{\T}{\mathbb{T}}
\newcommand{\D}{\mathbb{D}}
\newcommand{\N}{\mathbb{N}}
\newcommand{\Pn}{\mathcal{P}_n}
\DeclareMathOperator{\sgn}{sgn}
\DeclareMathOperator*{\esssup}{ess\,sup}
\subjclass[2020]{Primary 30E10; Secondary 46E30. Key words:  optimal polynomial approximant, $L^p$ space, Hardy space, digital filter, Shanks' Conjecture}
\begin{document}
\title[OPAs in $L^p$]{Optimal Polynomial Approximants in $L^p$}
\author[Centner]{Raymond Centner}
\address{Department of Mathematics and Statistics, University of South Florida, 4202 E. Fowler Avenue,
Tampa, Florida 33620-5700, USA.} \email{rcentner@usf.edu}

%\date{\today}

\begin{abstract}
Over the past several years, optimal polynomial approximants (OPAs) have been studied in many different function spaces.  In these settings, numerous papers have been devoted to studying the properties of their zeros.  In this paper, we introduce the notion of optimal polynomial approximant in the space $L^p$, $1\leq p\leq\infty$. We begin our treatment by showing existence and uniqueness for $1<p<\infty$.  For the extreme cases of $p=1$ and $p=\infty$, we show that uniqueness does not necessarily hold.  We continue our development by elaborating on the special case of $L^2$.  Here, we create a test to determine whether or not a given 1st degree OPA is zero-free in $\overline{\D}$.  Afterward, we shed light on an orthogonality condition in $L^p$.  This allows us to study OPAs in $L^p$ with the additional tools from the $L^2$ setting.  Throughout this paper, we focus many of our discussions on the zeros of OPAs.  In particular, we show that if $1<p<\infty$, $f\in H^p$, and $f(0)\neq 0$, then there exists a disk, centered at the origin, in which all the associated OPAs are zero-free.  Toward the end of this paper, we use the orthogonality condition to compute the coefficients of some OPAs in $L^p$.  To inspire further research in the general theory, we pose several open questions throughout our discussions.

\end{abstract}

\maketitle

\tableofcontents

%%%%%%%%%%%%%%%%%%%%%%%%%%%%%%%%%%%%%%%%%%%%%%%%%%%%%%%%%%%%%%%%%
\section{Introduction}

The methods of least-squares approximation have been used in many areas of engineering since the latter half of the 20th century.  In 1963, Robinson \cite{Ro} studied these methods in the context of digital signal processing and geophysical studies.  His goal was to obtain a finite-length wavelet whose convolution with a given finite-length wavelet best approximates the unit spike.  In other words, given a finite sequence of real numbers $b:=(b_0,b_1,\dots,b_m)$ and a particular value of $n\in\N$, he was trying to find another finite sequence of real numbers $a:=(a_0,a_1,\dots,a_n)$ such that the difference between $a*b$ and the unit spike $(1,0,\dots,0)$ has the smallest $l^2$-norm.  This sequence $a$ was referred to as a least-squares approximate inverse. 

In the 1970s, least-squares approximation appeared in connection with 2D recursive digital filter design.  Applications of 2D digital filtering include the processing of medical pictures, satellite photographs, seismic data mappings, gravity waves data, magnetic recordings, and radar and sonar maps \cite{G}.  In 1972, Justice, Shanks, and Treitel \cite{Sh} applied the methods of least-squares approximation in an effort to ensure 2D filter stability.  Generally speaking, a filter is called stable if it corresponds to a system in which bounded input yields bounded output.  In their studies, they considered the following problem:  given a polynomial $f(z,w)$ of two complex variables, find another polynomial $q(z,w)$ such that the difference between $qf$ and $1$ has the smallest $H^2(\D^2)$-norm.  The polynomial $q$ was referred to as a planar least-squares inverse (PLSI) polynomial.  It is one of the earliest examples of an \textit{optimal polynomial approximant}. 

In 1980, Chui \cite{Ch} formulated the problem of Robinson in the $H^2(\D)$ setting.  More specifically, he considered the following problem:  given $n\in\N$ and a polynomial $f(z):=\sum_{k=0}^mb_kz^k$ with $f(0)\neq 0$, find an $n$th degree polynomial $q(z):=\sum_{k=0}^na_kz^k$ such that the difference between $qf$ and $1$ has the smallest $H^2$-norm.  The polynomial $q$ was referred to as a least-squares inverse polynomial.  For brevity, we'll refer to these as LSI polynomials.  Chui's formulation of the problem ties in well with signal processing since digital filters are often represented by quotients of polynomials.  Now, in many contexts of $1$D recursive digital filter design, it is important to ensure that the poles of the filter are outside of $\overline{\D}$.  If this is the case, then the filter will be stable.  In 1982, Chui and Chan \cite{CC} proposed a design method that guaranteed filter stability.  In the build-up to their method, they reformulated the problem of Robinson to include arbitrary $f\in H^2(\D)$ with $f(0)\neq 0$.  They proceeded to show that for this general class of functions, the LSI polynomials are zero-free in $\overline{\D}$.  

Since the 1980s, least-squares inverse polynomials have been studied in many different function spaces.  In the mathematics community, these polynomials became known as optimal polynomial approximants (OPAs).  In 2015, OPAs were introduced in the Dirichlet-type spaces $\mathcal{D}_{\alpha}$, $\alpha\in\R$ (see \cite{BCLSS} for details).  Since then, several papers have been devoted to investigating the properties of these polynomials (see, e.g., \cite{BKLSS,BKLSS2,BMS}).  Such properties include boundary behavior, connections to reproducing kernel functions, and location of zeros.  In \cite{BKLSS}, it was shown that if $\alpha\geq 0$, $f\in\mathcal{D}_{\alpha}$, and $f(0)\neq 0$, then the zeros of the associated OPA lie outside of $\overline{\D}$; in more recent literature, optimal polynomial approximants and their zeros have been studied in the context of $\ell^p_A$ (see \cite{CRS} for details).

The primary motivation for writing this paper is to investigate the zeros of OPAs in the context of $H^p:=H^p(\D)$, $1\leq p\leq\infty$.  In particular, we would like to know if the nontrivial OPAs are zero-free in the closed disk $\overline{\D}$, which is the case in the Hardy space $H^2$ and the Dirichlet-type spaces $\mathcal{D}_{\alpha}$, $\alpha\geq 0$.  Recall that for $1\leq p<\infty$, $H^p$ is the space of analytic functions $f$ in the open unit disk $\D$ such that 
\begin{equation*}
    \sup_{0\leq r<1}\frac{1}{2\pi}\int_{-\pi}^{\pi}|f(re^{it})|^pdt<\infty.
\end{equation*}
It is well known that $H^p$ is a Banach space with norm
\begin{equation*}
    \|f\|_p:=\bigg\{\sup_{0\leq r<1}\frac{1}{2\pi}\int_{-\pi}^{\pi}|f(re^{it})|^pdt\bigg\}^{\frac{1}{p}}.
\end{equation*}
On the other hand, $H^{\infty}$ is the space of bounded analytic functions in $\D$; this is a Banach space with norm
\begin{equation*}
    \|f\|_{\infty}:=\sup_{z\in\D}|f(z)|.
\end{equation*}

To facilitate our study in the $H^p$ setting, we introduce the notion of optimal polynomial approximant in the space $L^p:=L^p(\T)$; in Section \ref{integralcharacterization}, we will see how OPAs in this context relate to OPAs in the more structured space $L^2$.  For $1\leq p<\infty$, $L^p$ is the space of Lebesgue measurable $\C$-valued functions $f$ on the unit circle $\T$ such that 
\begin{equation*}
\frac{1}{2\pi}\int_{-\pi}^{\pi}|f(e^{it})|^pdt<\infty. 
\end{equation*}
It is well known that $L^p$ is a Banach space with norm
\begin{equation*}
    \|f\|_{p}:=\bigg\{\frac{1}{2\pi}\int_{-\pi}^{\pi}|f(e^{it})|^pdt\bigg\}^{\frac{1}{p}}.
\end{equation*}
On the other hand, $L^{\infty}$ is the space of essentially bounded Lebesgue measurable $\C$-valued functions $f$ on $\T$; this is a Banach space with norm
\begin{equation*}
    \|f\|_{\infty}:=\esssup_{t\in[-\pi,\pi]}|f(e^{it})|.
\end{equation*}

It can be shown that for any $f\in H^p$, $1\leq p\leq\infty$, the nontangential limits $f^*(e^{it})$ exist almost everywhere on $\T$ and $f^*\in L^p$ (see, e.g., \cite{Dp}).  Futhermore, $\|f^*\|_p=\|f\|_p$.  This shows that $H^p$ is isometrically imbedded in $L^p$.  Therefore, we regard any $f\in H^p$ as a function in $L^p$ with the understanding that we are identifying $f$ with its nontangential limit function $f^*$.  Accordingly, any result we obtain for functions in $L^p$ will automatically hold for functions in $H^p$.  

Throughout this paper, we assume that $\N:=\{0,1,2,\dots\}$ and denote $\Pn$ as the space of analytic polynomials of degree at most $n\in\N$; for any $f\in L^p$, we let $f\Pn:=\{fq\colon q\in\Pn\}$.  We begin the paper by discussing existence and uniqueness of optimal polynomial approximants in $L^p$, $1\leq p\leq\infty$; for the extreme cases of $p=1$ and $p=\infty$, there seems to be a lot of fertile ground for research.  We then introduce OPAs in the Hilbert space $L^2$; this is a generalization of the familiar theory in $H^2$ (see \cite{BC} for a survey of OPAs in $\mathcal{D}_{\alpha}$).  In Section \ref{integralcharacterization}, we use an orthogonality condition to characterize OPAs in $L^p$, $1< p<\infty$; we later use this condition to compute some OPAs for several values of $p$.  Throughout this paper, we discuss what we know and pose several open questions about the zeros of optimal polynomial approximants in $L^p$.

%%%%%%%%%%%%%%%%%%%%%%%%%%%%%%%%%%%%%%%%%%%%%%%%%%%%%%%%%%%%%%%%%%%%
\section{Existence and Uniqueness}

The context of this paper will be the space $L^p$, $1\leq p\leq\infty$; we would like to have an adequate structure to support the theory, so it seems natural to consider only the values of $p$ for which $L^p$ is a Banach space.  In the context of $L^p$, existence and uniqueness of best approximations can be a delicate issue.  However, if $Y$ is a finite dimensional subspace of $L^p$, then there exists a best approximation to any $g\in L^p$ out of $Y$ (see, e.g., \cite[Theorem 6.1-1]{K}); this allows us to define the notion of optimal polynomial approximant.

%The notion of optimal polynomial approximant has been studied extensively in the context of the Dirichlet-type spaces $\mathcal{D}_{\alpha}$, $\alpha\in\R$.  

%For any $f\in\mathcal{D}_{\alpha}\setminus\{0\}$ and $n\in\N$, the $n$th OPA of $1/f$ in $\mathcal{D}_{\alpha}$ is defined as the unique polynomial that minimizes the norm $\|qf-1\|_{\alpha}$, where $q$ varies over $\mathcal{P}_n$.  Here, $\mathcal{P}_n$ denotes the space of polynomials of degree at most $n$.  In this case, it is easy to see that such a polynomial must exist and that it is unique:  since $\mathcal{D}_{\alpha}$ is a Hilbert space, the orthogonal projection of $1$ onto the subspace $f\mathcal{P}_n:=\{fq\colon q\in\Pn\}$ gives the unique function $fQ$ that minimizes the distance from $1$.  Since $f$ is not identically zero, the polynomial $Q$ must also be unique.

%In less structured normed spaces, existence and uniqueness of best approximations become more of a delicate issue.  

            \begin{prop}[Existence]\label{Existence}
Let $1\leq p\leq\infty$ and $n\in\N$.  For $f,g\in L^p$, $f\not\equiv 0$, there exists a polynomial $Q\in\Pn$ such that 
\[
\inf_{q\in\mathcal{P}_n}\|qf-g\|_p=\|Q f-g\|_p.
\]
\end{prop}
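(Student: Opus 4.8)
The plan is to recognize $\inf_{q\in\Pn}\|qf-g\|_p$ as the distance from $g$ to the set $f\Pn=\{qf\colon q\in\Pn\}$, and then to apply the existence result for best approximations out of a finite-dimensional subspace quoted just before the statement. Thus the entire proof reduces to verifying that $Y:=f\Pn$ is a finite-dimensional linear subspace of $L^p$; granting this, the cited theorem furnishes an element $Qf\in Y$ with $\|Qf-g\|_p=\dist(g,Y)=\inf_{q\in\Pn}\|qf-g\|_p$, which is exactly the assertion.

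First I would confirm that $Y\subseteq L^p$ and that $Y$ is a subspace. Every $q\in\Pn$ is continuous on the compact set $\T$, hence bounded there, so the pointwise estimate $|qf|\le\|q\|_\infty|f|$ yields $\|qf\|_p\le\|q\|_\infty\|f\|_p<\infty$ for all $1\le p\le\infty$; this places each product $qf$ in $L^p$. Closure under linear combinations is immediate, since $\alpha(q_1 f)+\beta(q_2 f)=(\alpha q_1+\beta q_2)f$ and $\Pn$ is itself a vector space.

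To see that $Y$ is finite-dimensional, I would note that the multiplication map $q\mapsto qf$ is a linear surjection from $\Pn$ onto $Y$, so $\dim Y\le\dim\Pn=n+1$. (In fact, when $f\not\equiv 0$ this map is injective: a nonzero $q\in\Pn$ vanishes at only finitely many points of $\T$, so $qf\equiv 0$ a.e.\ would force $f\equiv 0$; hence $\dim Y=n+1$. For existence, however, mere finiteness is all that is needed.) With $Y$ identified as a finite-dimensional subspace of the Banach space $L^p$, the quoted theorem applies and delivers the desired $Q$. The argument is essentially a bookkeeping reduction to a standard functional-analytic fact, so I do not anticipate a genuine obstacle; the only points demanding attention are the membership $qf\in L^p$ and the finite-dimensionality of $Y$, both of which hinge on the boundedness of polynomials on $\T$.
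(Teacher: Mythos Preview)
Your proposal is correct and follows the same approach as the paper: once $f\Pn$ is recognized as a finite-dimensional subspace of $L^p$, the quoted existence theorem for best approximations applies directly. The paper's own proof is the one-line observation ``Since $f\Pn$ is a finite dimensional subspace of $L^p$, the result is immediate,'' and your argument simply supplies the routine verification behind that sentence.
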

\begin{proof}
Since $f\Pn$ is a finite dimensional subspace of $L^p$, the result is immediate.
\end{proof}

Proposition \ref{Existence} states that there is a best approximation to $g$ out of the space $f\mathcal{P}_n$.  If $fQ$ is such a best approximation, then the polynomial $Q\in\Pn$ is referred to as an \textit{optimal polynomial approximant in $L^p$}.  Now, it's important to note that best approximations in the Banach space setting are not always unique.  However, $L^p$ is a strictly convex for $1<p<\infty$.  Therefore, for any given subspace $Y$ of $L^p$, there is at most one best approximation to any $g\in L^p$ out of $Y$ (see, e.g., \cite[Theorem 6.2-3]{K}).  This guarantees that optimal polynomial approximants in $L^p$, $1<p<\infty$, are unique. 
(See \cite{ST} for a similar discussion of uniqueness of OPAs in $\ell^p_A$.)

%(For information about uniqueness of OPAs in $\ell^p_A$, see \cite{ST}.)

%For example, consider the space of points in $\R^2$ with norm $\|(x,y)\|:=\max\{|x|,|y|\}$.  If $Y$ denotes the subspace consisting of all points on the line $x=1$, then any point on the line segment from $(1,-1)$ to $(1,1)$ is a best approximation to $(0,0)$ out of $Y$.

%On a different note, suppose that $(X, \|\cdot\|)$ is a strictly convex normed space.  

            \begin{prop}[Uniqueness]\label{Uniqueness}
Let $1<p<\infty$ and $n\in\N$.  For $f,g\in L^p$, $f\not\equiv 0$, there exists a unique polynomial $Q\in\Pn$ such that 
\begin{equation*}
    \inf_{q\in\mathcal{P}_n}\|qf-g\|_p=\|Q f-g\|_p. 
\end{equation*}
\end{prop}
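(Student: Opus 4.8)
The plan is to exploit the strict convexity of $L^p$ for $1<p<\infty$, which is the standard route to uniqueness of best approximations in these spaces. I would organize the argument around two ingredients: first, uniqueness of the minimizing \emph{element} of the subspace $f\Pn$; and second, the passage from that element back to uniqueness of the \emph{polynomial} $Q$.

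For the first ingredient, suppose $Q_1,Q_2\in\Pn$ both attain the infimum, and write $d:=\inf_{q\in\Pn}\|qf-g\|_p$. I would dispose of the case $d=0$ at once, since then $Q_1f-g=Q_2f-g=0$ almost everywhere and hence $Q_1f=Q_2f$ directly. Assuming $d>0$, set $u:=Q_1f-g$ and $v:=Q_2f-g$, so that $\|u\|_p=\|v\|_p=d$. Their average $\tfrac{1}{2}(u+v)=\big(\tfrac{Q_1+Q_2}{2}\big)f-g$ is again of the form $qf-g$ with $q=\tfrac{Q_1+Q_2}{2}\in\Pn$, so its norm is at least $d$; on the other hand the triangle inequality bounds it above by $d$, forcing $\|u+v\|_p=\|u\|_p+\|v\|_p$ with $u,v\neq 0$. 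Strict convexity of $L^p$ (a consequence of Clarkson's inequalities for $1<p<\infty$) then yields $u=v$, that is, $Q_1f=Q_2f$. Alternatively, this entire step is precisely the content of the abstract uniqueness theorem cited in the discussion preceding the statement, applied to the finite-dimensional subspace $Y=f\Pn$, so I could simply invoke it.

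The step I expect to be the genuine (if mild) obstacle is the second ingredient: the cited theorem only guarantees a unique best approximation \emph{in the subspace} $f\Pn$, whereas the proposition asserts uniqueness of the polynomial $Q$ itself. To bridge this gap I would show that the linear map $q\mapsto qf$ is injective on $\Pn$. Indeed, if $(Q_1-Q_2)f=0$ almost everywhere on $\T$ while $Q_1-Q_2\not\equiv 0$, then, a nonzero analytic polynomial having only finitely many zeros, we would have $Q_1-Q_2\neq 0$ almost everywhere, forcing $f=0$ almost everywhere and contradicting $f\not\equiv 0$. Hence $Q_1-Q_2\equiv 0$, i.e.\ $Q_1=Q_2$, which completes the proof.
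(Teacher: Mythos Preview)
Your argument is correct and follows the same route as the paper: strict convexity of $L^p$ for $1<p<\infty$ gives uniqueness of the best approximation in the subspace $f\Pn$. The paper's own proof is a two-line appeal to that fact and says nothing further.

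Your proposal is actually slightly more careful than the paper in one respect. The abstract uniqueness theorem only yields a unique element of $f\Pn$; the passage to a unique $Q\in\Pn$ requires that $q\mapsto qf$ be injective, which you verify using that a nonzero polynomial vanishes only on a finite set while $f\not\equiv 0$. The paper silently conflates these, so your added paragraph fills a genuine (if minor) gap.
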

\begin{proof}
It is well-known that $L^p$ is strictly convex for $1<p<\infty$.  Since $g$ has a best approximation out of $f\Pn$, it must be unique.
\end{proof}

In the case of $L^1$, Proposition \ref{Existence} implies that the set of best approximations to a given function $g$ out of $f\mathcal{P}_n$ is nonempty.  Now, since the set of best approximations must be convex (see, e.g., \cite[Lemma 6.2-1]{K}), there is either a unique best approximation or an infinite number of best approximations.  The following proposition shows that the former does not need to hold.

\begin{prop}\label{uniquenesseg1}
Optimal polynomial approximants in $L^1$ are not necessarily unique.
\end{prop}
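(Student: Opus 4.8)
The plan is to prove this by producing an explicit counterexample rather than arguing abstractly. Since the discussion preceding the statement already establishes that the set of best approximations out of $f\mathcal{P}_n$ is convex, it suffices to exhibit a single pair $f,g\in L^1$ and a degree $n$ for which two \emph{distinct} polynomials both attain the infimum; infinitely many minimizers then follow automatically. I would work in the simplest possible case $n=0$, where $\mathcal{P}_0$ consists of the constant polynomials, and take $f\equiv 1$ (so $f\not\equiv 0$ and $f\mathcal{P}_0$ is the space of constants). The problem then collapses to finding $c\in\C$ minimizing $\|c-g\|_1$, and the whole task reduces to choosing $g$ so that this planar minimization is degenerate.

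The key structural idea is that the $L^1$ norm fails to be strictly convex precisely because the unit ball of $L^1$ has flat faces, and these flat faces are witnessed by errors that keep a constant sign on sets of positive measure. Accordingly, I would take $g$ to be the $\pm1$-valued function given by $g(e^{it})=1$ on $[0,\pi)$ and $g(e^{it})=-1$ on $[-\pi,0)$; this $g$ is bounded, hence lies in $L^1=L^1(\T)$. First I would reduce to real constants: writing $c=a+ib$, one has $|c-g|=\sqrt{(a-g)^2+b^2}\ge|a-g|$ pointwise, with strict inequality on a set of positive measure whenever $b\neq 0$, so any minimizer must be real. Then a direct computation gives, for real $c\in[-1,1]$,
\[
\|c-g\|_1=\frac{1}{2\pi}\Big(\int_0^{\pi}(1-c)\,dt+\int_{-\pi}^0(1+c)\,dt\Big)=\tfrac{1}{2}\big((1-c)+(1+c)\big)=1,
\]
while $|c|>1$ yields a strictly larger value. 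Hence every constant in the interval $[-1,1]$ is a best approximation, and in particular $Q\equiv 0$ and $Q\equiv \tfrac12$ are two distinct OPAs in $L^1$, which establishes non-uniqueness.

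The main obstacle is really just recognizing what makes this work and what is essential to it: the error $c-g$ must be allowed to vanish or change sign on a set of positive measure in order to exploit the flatness of the $L^1$ ball. This is exactly why $g$ is taken to be non-analytic. For $f,g\in H^1$ the error $g-Qf$ would be the boundary function of an analytic function, which cannot vanish on a positive-measure subset of $\T$ without vanishing identically, so this cancellation trick is unavailable and non-uniqueness in that setting becomes a genuinely more delicate question. Since the proposition is stated for general $f,g\in L^1$, the construction above is legitimate and suffices.
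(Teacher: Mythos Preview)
Your proof is correct and uses essentially the same example as the paper: $f\equiv 1$, $n=0$, and a two-valued step function $g$ (the paper takes $g=\chi_{[-\pi,0)}$, which is an affine image of your $\pm 1$ function, so the set of minimizers $[0,1]$ there corresponds to your $[-1,1]$). The one genuine difference is in how optimality is certified: the paper invokes Hahn--Banach duality, pairing $g$ against the sign function $h_0=\chi_{[-\pi,0)}-\chi_{[0,\pi)}$ in the annihilator of the constants to obtain the lower bound $\tfrac12$, whereas you argue directly by first reducing to real $c$ via the pointwise inequality $\sqrt{(a-g)^2+b^2}\ge |a-g|$ and then observing that $c\mapsto \tfrac12(|c-1|+|c+1|)$ is constant on $[-1,1]$ and strictly larger outside. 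Your route is more elementary and self-contained; the duality argument, while slightly heavier here, points toward how one would handle less explicit examples.
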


\begin{proof}
Let $f\equiv 1$, and consider the characteristic function $g(e^{it}):=\chi_{[-\pi,0)}(t)$.  We show that 
\begin{equation*}
    \inf_{q\in\mathcal{P}_0}\|qf-g\|_1=\|af-g\|_1
\end{equation*}
for all $a\in[0,1]$.  Let $\Lambda:=\{h\in L^{\infty}\colon \|h\|_{\infty}\leq 1\;\text{and}\;\frac{1}{2\pi}\int_{-\pi}^{\pi}h dt=0\}$.
By Hahn-Banach duality, it follows that 
\begin{equation*}
    \inf_{q\in\mathcal{P}_0}\|qf-g\|_1=\sup_{h\in\Lambda}\bigg|\frac{1}{2\pi}\int_{-\pi}^{\pi}gh dt\bigg|.
\end{equation*}
If we let $h_0(e^{it}):=\chi_{[-\pi,0)}(t)-\chi_{[0,\pi)}(t)$, then we see that $h_0\in\Lambda$.  Consequently, 
\begin{equation}\label{ineq1}
    \inf_{q\in\mathcal{P}_0}\|qf-g\|_1\geq \bigg|\frac{1}{2\pi}\int_{-\pi}^{\pi}gh_0 dt\bigg|=\frac{1}{2}.
\end{equation}
Now, suppose that $a\in[0,1]$.  Then
\begin{align}\label{eqq2}
    \|af-g\|_1&=\frac{1}{2\pi}\int_{-\pi}^{\pi}|a-g|dt\nonumber\\
    &=\frac{1}{2\pi}\int_{-\pi}^{0}(1-a)dt+\frac{1}{2\pi}\int_{0}^{\pi}a dt\nonumber\\
    &=\frac{1}{2}.
\end{align}
From (\ref{ineq1}) and (\ref{eqq2}), we conclude that
\begin{equation*}
    \inf_{q\in\mathcal{P}_0}\|qf-g\|_1=\|af-g\|_1
\end{equation*}
whenever $a\in[0,1]$.
\end{proof}

On the other hand, consider the case of $L^{\infty}$.  From Proposition \ref{Existence} and our previous discussion, the set of best approximations to a given function $g$ out of $f\Pn$ consists of either a single element or an infinite number of elements.  As in the case of $L^1$, the former does not need to hold.

                \begin{prop}\label{uniquenesseg}
Optimal polynomial approximants in $L^{\infty}$ are not necessarily unique.
\end{prop}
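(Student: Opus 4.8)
The plan is to produce an explicit pair $f,g\in L^{\infty}$ together with a degree $n$ for which the set of best approximants is a nondegenerate disk rather than a single point. The feature I would exploit is that, unlike the $H^p$ setting, an element of $L^p$ need not be analytic, so I am free to let $f$ vanish on a subset of $\T$ of positive measure. This decouples the error $\|qf-g\|_{\infty}$ into a part that depends on $q$ (on the support of $f$) and a part that is independent of $q$ (off the support of $f$); if the latter dominates, then an entire region of polynomials $q$ becomes optimal.

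Concretely, I would take $n=0$, $f(e^{it}):=\chi_{[0,\pi)}(t)$ and $g(e^{it}):=\chi_{[-\pi,0)}(t)$, so that a candidate approximant is simply a constant $a\in\C$. First I would split the circle along the support of $f$: on $[-\pi,0)$ one has $af-g=-1$ for \emph{every} $a$, while on $[0,\pi)$ one has $af-g=a$. Since both arcs have positive measure, this yields the clean formula
\[
\|af-g\|_{\infty}=\max\{|a|,\,1\}.
\]
From this the conclusion is immediate: the infimum over $a\in\C$ equals $1$, and it is attained precisely for every $a$ in the closed unit disk $\overline{\D}$. Hence there are infinitely many optimal polynomial approximants, which proves that uniqueness can fail in $L^{\infty}$.

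I expect the only real subtlety to be conceptual rather than computational. One must recognize that admitting non-analytic symbols $f$ is exactly what breaks uniqueness, and that the off-support contribution $\esssup_{[-\pi,0)}|g|=1$ must strictly exceed the Chebyshev radius of $g$ restricted to the support of $f$ (here that radius is $0$, since $g\equiv 0$ there). This gap is what forces the minimizing region $\{a:|a|\le 1\}$ to have positive radius. It is worth contrasting this with the case $f\equiv 1$, $n=0$, where the best constant is the unique Chebyshev center of the essential range of $g$, so that uniqueness \emph{does} hold; the vanishing of $f$ on a set of positive measure is precisely the mechanism that defeats this.
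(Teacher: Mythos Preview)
Your example is correct and the computation is clean: with $f=\chi_{[0,\pi)}$, $g=\chi_{[-\pi,0)}$, and $n=0$, one indeed has $\|af-g\|_\infty=\max\{|a|,1\}$, so every $a\in\overline{\D}$ is optimal.

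The paper takes a different route. It uses the \emph{analytic} function $f(z)=1-z$ with $g\equiv 1$ and $n=0$, and shows that $\|af-1\|_\infty=|a-1|+|a|$ for all $a\in\C$, so that the infimum $1$ is attained precisely on the real segment $[0,1]$. Your construction is computationally simpler and transparently exhibits the failure of strict convexity. The paper's example, on the other hand, lives entirely in $H^\infty$ and therefore says more relative to the paper's motivation: nonuniqueness already occurs for an analytic $f$ that vanishes only at the single boundary point $z=1$. In particular, your closing remark that ``the vanishing of $f$ on a set of positive measure is precisely the mechanism that defeats this'' overstates the case; vanishing on a set of positive measure is \emph{a} convenient mechanism, but the paper's example shows it is not the only one.
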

\begin{proof}
Let $f(z)=1-z$.  We show that 
\begin{equation*}
\inf_{q\in\mathcal{P}_0}\|qf-1\|_{\infty}=\|af-1\|_{\infty}
\end{equation*}
for all $a\in[0,1]$.  For any $a\in\C$,
\begin{equation}
    \|af-1\|_{\infty}=\|(a-1)-az\|_{\infty}\leq  |a-1|+|a|. \label{equ}
\end{equation}
If $a=0$, then equality in (\ref{equ}) clearly holds.  If $a\neq 0$, let
\begin{equation*}
    w:=-\frac{|a|}{a}e^{i\arg(a-1)}.
\end{equation*}
Note that 
\begin{align*}
    |af(w)-1|&=|(a-1)-aw|\\
    &=\big||a-1|e^{i\arg(a-1)}+|a|e^{i\arg(a-1)}\big|\\
    &=|a-1|+|a|.
    \end{align*}
Thus, equality in (\ref{equ}) holds, and it follows that 
\begin{equation*}
    \|af-1\|_{\infty}=|a+1|+|a|\geq (1-|a|)+|a|=1
\end{equation*}
for all $a\in\C$.  Consequently,
\begin{equation}\label{inf1}
\inf_{q\in\mathcal{P}_0}\|qf-1\|_{\infty}=1,    
\end{equation}
where the infimum is attained for all $a\in\C$ such that $|a-1|+|a|=1$.

If $a\in\C$ satisfies $|a-1|+|a|=1$, then
\begin{equation*}
    |1-a|+|a|=|(1-a)+a|.
\end{equation*}
Hence, $a=\alpha(1-a)$ for some positive real $\alpha$.  It follows that $a$ is a real number in $[0,1]$.  Conversely, if $a\in[0,1]$, then 
\begin{equation*}
    |a-1|+|a|=(1-a)+a=1.
\end{equation*}
Therefore, we conclude that $|a-1|+|a|=1$ if and only if $a\in [0,1]$.  It follows that 
\begin{equation}\label{inf2}
    \|af-1\|_{\infty}=1\quad\text{for all}\quad a\in[0,1].
\end{equation}
By combining (\ref{inf1}) and (\ref{inf2}), the result follows.
\end{proof}

Although optimal polynomial approximants in $L^p$ are not necessarily unique for $p=1$ or $p=\infty$, there are still interesting questions that one can ask about best approximations in these spaces.  For example, when there are infinitely many best approximations to $g$ out of $f\Pn$, is there an optimal polynomial approximant that has norm strictly smaller than all the others?  In the case of Proposition \ref{uniquenesseg1} (or Proposition \ref{uniquenesseg}), this optimal polynomial approximant is clearly the constant $0$.  As another example, is there a way to characterize the functions $f$ such that the constant function $0$ is a best approximation to $1$ out of $f\mathcal{P}_n$?  In Proposition \ref{propzero}, we will see an analogous characterization in the context of $H^p$, $1<p<\infty$.  Nevertheless, a best approximation to $g$ out of $f\Pn$ is unique whenever $1<p<\infty$.  This leads us to the following definition.  

            \begin{defn}[$n$th OPA]
Let $1<p<\infty$ and $n\in\N$.  For $f,g\in L^p$, $f\not\equiv 0$, the unique polynomial that minimizes the norm $\|qf-g\|_p$, where $q$ varies over $\Pn$, is denoted by $q_{n,p}[f,g]$ and is called the $n$th OPA of $g/f$ in $L^p$.

%Let $1<p<\infty$ and $n\in\N$.  For $f,g\in L^p$, $f\not\equiv 0$, the unique polynomial in Proposition \ref{Uniqueness} is denoted by $q_{n,p}[f,g]$ and is called the $n$th OPA of $g/f$ in $L^p$.
\end{defn}

In the next section, we develop the theory of OPAs in $L^2$; the inner product structure gives us an efficient way to compute the coefficients of OPAs; it also gives us an easy way to verify whether or not the OPA $q_{1,2}[f,g]$ is zero-free in $\overline{\D}$.  As we will see in Section \ref{integralcharacterization}, it is possible to relate certain OPAs in $L^p$, $p\geq2$, to OPAs in $L^2$.

%%%%%%%%%%%%%%%%%%%%%%%%%%%%%%%%%%%%%%%%%%%%%%%%%%%%%%%%%%%%%%%%%%%%%
\section{OPAs in \texorpdfstring{$L^2$}{TEXT}}\label{sec3}

In 1980, Chui \cite{Ch} presented the idea of least-squares inverse polynomials in the context of $H^2$.  The LSI polynomials being studied can be viewed as OPAs of the form $q_{n,2}[f,1]$, where $f$ is a polynomial.  In 1982, Chui and Chan \cite{CC} presented a filter design method that extended this study to include arbitrary functions $f\in H^2$, $f(0)\neq 0$.  The reason for considering more general $f$ was to approximate certain outer functions that were defined in conjunction with an ideal digital filter.  One of the selling points of this method was that the coefficients of the LSI polynomial could be expressed as the solution to a system of linear equations.  Moreover, the associated matrix is positive definite and Toeplitz.  (In fact, it's a Gram matrix.) Therefore, its inverse can be computed quickly with any of the available algorithms (see \cite{A} for details). 

In this section, we'll see that some of the most important OPA properties in $H^2$ (particularly the fact that the coefficients satisfy a system of linear equations) generalize naturally to the $L^2$ setting.  Before we begin, let's note that the following proposition follows immediately from the notion of orthogonal projection.

    \begin{prop}[Linearity]\label{props}
Let $f,g,h\in L^2$, $f\not\equiv 0$, $n\in\N$, and $\alpha,\beta\in\C$.  Then
\begin{equation*}
   q_{n,2}[f,\alpha g+\beta h]=\alpha q_{n,2}[f,g]+\beta q_{n,2}[f,h].
   \end{equation*}

\begin{proof}
By definition, the OPA $q_{n,2}[f,g]$ minimizes the norm $\|qf-g\|_2$, where $q$ varies over $\mathcal{P}_n$.  Since $L^2$ is a Hilbert space and $f\mathcal{P}_n$ is a closed subspace, it follows that $q_{n,2}[f,g]f$ is the orthogonal projection of $g$ onto $f\mathcal{P}_n$.  The proposition follows by linearity of the projection.
\end{proof}

\end{prop}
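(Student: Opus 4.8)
The plan is to reduce the statement to the linearity of orthogonal projection, exploiting the fact that $L^2$ is a Hilbert space. The starting observation is that $f\Pn$ is a finite-dimensional, hence closed, subspace of $L^2$, so by the projection theorem the best approximation to any element out of $f\Pn$ is given by the orthogonal projection onto $f\Pn$. Writing $P$ for this projection, the defining property of the OPA says precisely that $q_{n,2}[f,g]\,f=P(g)$ for every $g\in L^2$; this is the bridge between the (nonlinear-looking) minimization and a genuine linear operator.

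With that identification in hand, the argument is short. First I would record that $P$ is linear, so $P(\alpha g+\beta h)=\alpha P(g)+\beta P(h)$. Applying the identification to each term gives $q_{n,2}[f,\alpha g+\beta h]\,f=\alpha\,q_{n,2}[f,g]\,f+\beta\,q_{n,2}[f,h]\,f$, and factoring $f$ out on the right rewrites this as $q_{n,2}[f,\alpha g+\beta h]\,f=\bigl(\alpha\,q_{n,2}[f,g]+\beta\,q_{n,2}[f,h]\bigr)f$. The last step is to cancel the factor $f$ and conclude that the two polynomials agree.

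The only point requiring care --- and the nearest thing to an obstacle --- is the cancellation of $f$. Since $\alpha\,q_{n,2}[f,g]+\beta\,q_{n,2}[f,h]\in\Pn$, one way to finish is to invoke uniqueness of the OPA (the $p=2$ case of Proposition \ref{Uniqueness}): the displayed equality exhibits this polynomial as minimizing $\|qf-(\alpha g+\beta h)\|_2$ over $q\in\Pn$, so it must coincide with $q_{n,2}[f,\alpha g+\beta h]$. Alternatively, one checks directly that $q\mapsto qf$ is injective on $\Pn$ whenever $f\not\equiv 0$, since $qf=0$ almost everywhere on $\T$ together with $f\neq 0$ on a set of positive measure forces the polynomial $q$ to vanish on a set of positive measure and hence identically. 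Either route closes the argument; I expect the whole proof to be routine once the projection identification is made explicit.
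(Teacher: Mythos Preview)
Your proof is correct and follows essentially the same approach as the paper: both identify $q_{n,2}[f,g]f$ as the orthogonal projection of $g$ onto the closed subspace $f\Pn$ and then invoke linearity of the projection. You are in fact slightly more careful than the paper, since you explicitly justify the cancellation of $f$ (via uniqueness of the OPA or injectivity of $q\mapsto qf$ on $\Pn$), a point the paper leaves implicit.
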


In the following result, we characterize OPAs of the form $q_{n,2}[f,g]$, where $f,g\in L^2$.  

            \begin{prop}\label{ortho}
Let $f,g\in L^2$, $f\not\equiv 0$, $n\in \N$, and $Q\in\Pn$.  Then
\begin{equation*}
    \langle Qf-g,z^kf\rangle=0
\end{equation*}
for $k=0, \dots, n$ if and only if $Q=q_{n,2}[f,g]$.
\end{prop}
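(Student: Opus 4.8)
The plan is to recognize this as the standard orthogonality characterization of the orthogonal projection onto the closed subspace $f\Pn$. First I would record the two structural facts already available: since $f\Pn$ is finite dimensional it is closed, and it is spanned by the $n+1$ functions $f, zf, \dots, z^n f$, because every $q \in \Pn$ is a linear combination of $1, z, \dots, z^n$ and hence $qf$ is the corresponding linear combination of the $z^k f$. By linearity of the inner product in its first argument, a function $u \in L^2$ is therefore orthogonal to all of $f\Pn$ if and only if $\langle u, z^k f\rangle = 0$ for each $k = 0, \dots, n$; this reduces the single condition that $u$ be orthogonal to $f\Pn$ to the $n+1$ scalar conditions appearing in the statement.

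Next I would invoke the projection theorem in the following form: for a closed subspace $Y$ of a Hilbert space, a point $y_0 \in Y$ is the unique best approximation to $g$ out of $Y$ precisely when $g - y_0$ is orthogonal to $Y$. Applying this with $Y = f\Pn$ and $y_0 = Qf$ shows that $Qf$ is the best approximation to $g$ out of $f\Pn$ if and only if $g - Qf$ is orthogonal to $f\Pn$, which by the previous paragraph holds if and only if $\langle g - Qf, z^k f\rangle = 0$ for $k = 0, \dots, n$. Since $\langle Qf - g, z^k f\rangle = -\langle g - Qf, z^k f\rangle$, the sign is immaterial, and these are exactly the conditions in the proposition.

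It remains only to tie this to the definition of $q_{n,2}[f,g]$. By definition this OPA is the polynomial for which $Qf$ minimizes $\|qf - g\|_2$ over $q \in \Pn$, that is, for which $Qf$ is the best approximation to $g$ out of $f\Pn$; uniqueness (guaranteed by strict convexity, as in Proposition \ref{Uniqueness}, or directly by the projection theorem) ensures that the minimizing polynomial is unique, so the conditions that $Q = q_{n,2}[f,g]$ and that $Qf$ be the best approximation to $g$ are interchangeable. Chaining the equivalences then yields both directions simultaneously. I do not anticipate a genuine obstacle here; the only step warranting a moment's care is the passage between orthogonality against the spanning set $\{z^k f\}_{k=0}^n$ and orthogonality against all of $f\Pn$, which is precisely where the linearity noted in the first paragraph is used.
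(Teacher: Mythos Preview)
Your proposal is correct and follows essentially the same approach as the paper: both arguments identify $q_{n,2}[f,g]f$ with the orthogonal projection of $g$ onto the closed subspace $f\Pn$ and then invoke the projection theorem to characterize it via orthogonality of the residual to the spanning set $\{z^k f\}_{k=0}^n$. Your write-up is in fact a bit more explicit than the paper's about the reduction from orthogonality against all of $f\Pn$ to orthogonality against the spanning set, but the underlying idea is identical.
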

\begin{proof}
Let $q_{n,2}:=q_{n,2}[f,g]$.  If $\langle Qf-g,z^kf\rangle=0$ for $k=0,\dots,n$, then $(Qf-g)\perp f\mathcal{P}_n$.  Since $g=Qf+(g-Qf)$, it follows that $Qf$ is the projection of $g$ onto $f\Pn$, i.e., $Q=q_{n,2}$.  Conversely, since $q_{n,2}f$ is the projection of $g$ onto $f\Pn$, and since $g=q_{n,2}f+(g-q_{n,2}f)$, it follows that $(q_{n,2}f-g)\in L^2\ominus f\Pn$.  In particular, $\langle q_{n,2}f-g,z^kf\rangle=0$ for $k=0,\dots,n$.
\end{proof}

A valuable aspect of Proposition \ref{ortho} is that it allows us to compute the coefficients of $q_{n,2}[f,g]$ in an efficient way; as we will see, the coefficients are expressed as a solution to a system of linear equations.  In fact, the associated matrix has the same properties as the one that Chui and Chan were considering in \cite{CC}.  (For a recent discussion of the computation of coefficients in the context of analytic function spaces, see \cite{BIMS,BMS}.) 

%it's interesting to note that this matrix is independent of the choice of $g$. 

            \begin{theorem}[Coefficients in $L^2$]\label{coeff}
Let $f,g\in L^2$, $f\not\equiv 0$, and $n\in\N$.  Set $q_{n,2}[f,g](z)=\sum_{j=0}^na_jz^j$.  Then the coefficients of $q_{n,2}[f,g]$ satisfy
\begin{equation*}
    \delimitershortfall=3pt
    \begin{bmatrix}
    \langle f,f\rangle & \langle zf,f\rangle &\dots&\langle z^nf,f\rangle\\
    \langle f,zf\rangle&\langle zf,zf\rangle&\dots&\langle z^nf,zf\rangle\\
    \vdots&\vdots&&\vdots\\
    \langle f,z^nf\rangle&\langle zf,z^nf\rangle&\dots&\langle z^nf,z^nf\rangle
    \end{bmatrix}
    \begin{bmatrix}
    a_0\\
    a_1\\
    \vdots\\
    a_n
    \end{bmatrix}
    =
    \begin{bmatrix}
    \langle g,f\rangle\\
    \langle g, zf\rangle\\
    \vdots\\
    \langle g,z^nf\rangle
    \end{bmatrix}.
\end{equation*}
\end{theorem}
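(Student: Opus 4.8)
The plan is to derive the linear system directly from the orthogonality characterization in Proposition \ref{ortho}, using nothing beyond the (conjugate-)linearity of the $L^2$ inner product. Since the $n$th OPA $q_{n,2}[f,g]$ is by definition a polynomial in $\Pn$ that equals itself, Proposition \ref{ortho} applies with $Q=q_{n,2}[f,g]$ and yields
\begin{equation*}
\langle q_{n,2}[f,g]\,f-g,\;z^k f\rangle = 0 \qquad (k=0,\dots,n).
\end{equation*}
This single identity is the engine of the proof; the theorem is essentially a bookkeeping restatement of it in matrix form.

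First I would write $q_{n,2}[f,g](z)=\sum_{j=0}^n a_j z^j$, so that $q_{n,2}[f,g]\,f=\sum_{j=0}^n a_j z^j f$. Substituting into the orthogonality relation and separating the two terms gives, for each fixed $k\in\{0,\dots,n\}$,
\begin{equation*}
\Big\langle \sum_{j=0}^n a_j z^j f,\;z^k f\Big\rangle = \langle g, z^k f\rangle.
\end{equation*}
Next I would invoke linearity of the inner product in its first slot to pull the coefficients $a_j$ out of the sum, obtaining
\begin{equation*}
\sum_{j=0}^n a_j\,\langle z^j f, z^k f\rangle = \langle g, z^k f\rangle \qquad (k=0,\dots,n).
\end{equation*}
These $n+1$ scalar equations are precisely the rows of the asserted matrix identity: the coefficient of $a_j$ in the $k$th equation is $\langle z^j f, z^k f\rangle$, which is the $(k,j)$ entry of the displayed matrix, and the right-hand side of the $k$th equation is $\langle g, z^k f\rangle$, the $k$th entry of the displayed column vector. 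Assembling the equations into matrix form then completes the argument.

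The only thing to watch — rather than a genuine obstacle — is the indexing convention, namely that the coefficients $a_j$ sit in the first (linear) slot of each inner product, so that linearity extracts them cleanly without conjugation; one must check that the matrix as written really carries $\langle z^j f, z^k f\rangle$ in row $k$, column $j$, matching the order of terms produced by expanding $\langle\sum_j a_j z^j f,\,z^k f\rangle$. Because the system arises from a bona fide inner product, the coefficient matrix is the Gram matrix of $f, zf, \dots, z^n f$, hence Hermitian and positive semidefinite (positive definite when these functions are linearly independent). This both confirms the consistency of the statement with the uniqueness established earlier and recovers the positive-definite structure noted for the Chui--Chan setting.
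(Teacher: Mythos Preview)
Your proof is correct and follows essentially the same approach as the paper: apply Proposition~\ref{ortho} with $Q=q_{n,2}[f,g]$, expand $Q$ as $\sum_j a_j z^j$, use linearity of the inner product to obtain $\sum_{j=0}^n a_j\langle z^jf,z^kf\rangle=\langle g,z^kf\rangle$ for $k=0,\dots,n$, and rewrite this system in matrix form. Your additional remarks about the Gram matrix structure are accurate but go slightly beyond what the paper's proof records.
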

\begin{proof}
From Proposition \ref{ortho}, it follows that
\begin{equation}
    \sum_{j=0}^n\langle z^jf,z^kf\rangle a_j=\langle g,z^kf\rangle \quad\text{for}\quad k=0,\dots,n. \label{eq1}
\end{equation}
Let $a$ denote the vector with components $a_j$, $j=0,\dots, n$,  and $y$ denote the vector with components $\langle g,z^kf\rangle$, $k=0,\dots,n$.  If $B$ is the $(n+1)\times(n+1)$ matrix with entries $b_{kj}=\langle z^jf,z^kf\rangle$, $0\leq k,j\leq n$, it is easy to see that (\ref{eq1}) is equivalent to $Ba=y$.
\end{proof}

%It's worth noting that this result can be formulated in the context of $L^2(\mu)$, where $d\mu:=\frac{1}{2\pi}|f|^2dt$.  

Let's look at an example to demonstrate the computation of coefficients in $L^2$.

                    \begin{ex}\label{ex3.4}
Let $f(z)=1-z$.  Then $q_{1,2}[f,1](z)=\frac{1}{3}z+\frac{2}{3}$.
\end{ex}

\begin{proof}
Set $q_{1,2}[f,1](z)=a_1z+a_0$.  By Theorem \ref{coeff}, the coefficients of $q_{1,2}[f,1](z)$ satisfy
\begin{equation*}
    \begin{bmatrix}
    2&-1\\
    -1&2
    \end{bmatrix}
    \begin{bmatrix}
    a_0\\
    a_1
    \end{bmatrix}
    =
    \begin{bmatrix}
    1\\
    0
    \end{bmatrix}.
\end{equation*}
Therefore, it easily follows that $q_{1,2}[f,1](z)=\frac{1}{3}z+\frac{2}{3}$.
\end{proof}

As previously suggested, the design method of Chui and Chan was motivated by the fact that all nontrivial LSI polynomials are zero-free in $\overline{\D}$.  Since we are extending these polynomials to the $L^2$ setting, it's natural to question if OPAs of the form $q_{n,2}[f,g]$, where $f,g\in L^2$, are zero-free in $\overline{\D}$.  It turns out that this is not the case; the following theorem shows that the function $g$ plays a prominent role.

%with this in mind, one could design a digital filter whose poles are outside of $\overline{\D}$ (a stable filter). 

%To put it differently, if $f\in H^2$ and $f(0)\neq 0$, then $q_{n,2}[f,1]$ does not vanish in $\overline{\D}$.  

%Obtaining such a filter is an important aspect of the design process. Filters that are not stable could produce output that increases without bound, which could be problematic in practice.

%In the general context of $L^2$, it's natural to question if OPAs of the form $q_{n,2}[f,g]$ are zero-free in $\overline{\D}$.  

            \begin{theorem}\label{condition}
Let $f,g\in L^2$ and $f\not\equiv 0$.  The OPA $q_{1,2}[f,g]$ is zero-free in $\overline{\D}$ if and only if 
\begin{equation*}
    |\langle g,f\rangle|>|\langle g,zf\rangle|.
\end{equation*}
\end{theorem}
\begin{proof}
Set $q_{1,2}[f,g](z)=a_1z+a_0$.  From Proposition \ref{ortho}, we have that 
\begin{equation}
    a_0\langle f,z^kf\rangle+a_1\langle zf,z^kf\rangle=\langle g,z^kf\rangle\quad\text{for}\quad k=0,1.\label{eq3}
\end{equation}
This is equivalent to
\begin{equation}
    \begin{bmatrix}
    \langle f,f\rangle&\langle zf,f\rangle\\
    \langle f,zf\rangle&\langle zf,zf\rangle
    \end{bmatrix}
    \begin{bmatrix}
    a_0\\
    a_1
    \end{bmatrix}
    =
    \begin{bmatrix}
    \langle g,f\rangle\\
    \langle g,zf\rangle
    \end{bmatrix}.\label{eq2}
\end{equation}
Let $A$ denote the $2\times 2$ matrix in (\ref{eq2}).  Since $|\langle f,f\rangle|>|\langle f,zf\rangle|$, and since the shift operator is an isometry on $H^2$, it follows that $\det A\neq 0$.  Therefore,
\begin{equation*}
    a_0=\frac{\langle zf,zf\rangle \langle g,f\rangle-\langle zf,f\rangle\langle g,zf\rangle}{\det A}
\end{equation*}
and
\begin{equation*}
    a_1=\frac{\langle f,f\rangle\langle g,zf\rangle-\langle f,zf\rangle\langle g,f\rangle }{\det A}.
\end{equation*}
\item[Case 1 ($a_1=0$):]
If $q_{1,2}[f,g]$ is zero-free in $\overline{\D}$, then $a_0\neq 0$.  It follows from (\ref{eq3}) that
\begin{align*}
    |\langle g,f\rangle|=|\langle a_0f,f\rangle|
    >|\langle a_0f,zf\rangle|
    =|\langle g,zf\rangle|.
\end{align*}
Conversely, suppose that $|\langle g,f\rangle|>|\langle g,zf\rangle|$.  Then $\langle g,f\rangle$ and $\langle g,zf\rangle$ cannot simultaneously be zero. From $(\ref{eq2})$, it follows that $a_0\neq 0$, i.e., $q_{1,2}[f,g]$ is zero-free in $\overline{\D}$.
\item[Case 2 ($a_1\neq 0$):]
Let $q_{1,2}[f,g](z_0)=0$.  Then
\begin{align}
    z_0=-\frac{a_0}{a_1}=\frac{\langle zf,f\rangle\langle g,zf\rangle-\langle zf,zf\rangle\langle g,f\rangle}{\langle f,f\rangle\langle g,zf\rangle-\langle f,zf\rangle\langle g,f\rangle}.\label{eq4}
\end{align}
If $\langle g,zf\rangle=0$, then it follows from (\ref{eq2}) that $\langle g,f\rangle\neq 0$.  From (\ref{eq4}), we see that
\begin{equation*}
    |z_0|=\frac{|\langle zf,zf\rangle|}{|\langle f,zf\rangle|}>1.
\end{equation*}
Therefore, both implications in the theorem are trivially true.  We thus assume that $\langle g,zf\rangle\neq 0$.  Let $w:=\frac{\langle g,f\rangle}{\langle g,zf\rangle}$ and $\alpha:=\frac{\langle zf,f\rangle}{\langle zf,zf\rangle}$.  From (\ref{eq4}), it follows that 
\begin{equation*}
    |z_0|=\bigg|\frac{w-\alpha}{1-\overline{\alpha}w}\bigg|.
\end{equation*}
Since $\alpha\in\D$, $z_0\in\overline{\D}$ if and only if $w\in\overline{\D}$.  Equivalently, $q_{1,2}[f,g]$ is zero-free in $\overline{\D}$ if and only if $|\langle g,f\rangle|>|\langle g,zf\rangle|$.
\end{proof}

The following example demonstrates the relative ease of applying Theorem \ref{condition}.

                \begin{ex}[Zero in $\overline{\D}$]\label{ex6}
Let $f(z)=z^2+1$ and $g(z)=2z-1$.  Note that 
\begin{equation*}
    |\langle g,f\rangle|=1<2=|\langle g,zf\rangle|.
\end{equation*}
By Theorem \ref{condition}, $q_{1,2}[f,g]$ has its zero in $\overline{\D}$.  

%More specifically, by setting $q_{1,2}[f,g](z)=a_1z+a_0$, it follows from Theorem \ref{coeff} that
%\begin{equation*}
 %   \begin{bmatrix}
  %  2& 0\\
   % 0& 2
    %\end{bmatrix}
%    \begin{bmatrix}
 %   a_0\\
  %  a_1
   % \end{bmatrix}
%    =
 %   \begin{bmatrix}
  %  -1\\
   % 2
    %\end{bmatrix}.
%\end{equation*}
%Therefore, $q_{1,2}[f,g](z)=z-\frac{1}{2}$.
\end{ex}

          %      \begin{ex}[Zero on $\T$]
%Let $f(z)=1-z$ and $g(z)=z+2$.  Note that
%\begin{equation*}
 %   |\langle g,f\rangle|=1=|\langle g,zf\rangle|.
%\end{equation*}
%By Theorem \ref{condition}, $q_{1,2}[f,g]$ must have its zero in $\overline{\D}$.  An easy calculation shows that $q_{1,2}[f,g](z)=z+1$.
%\end{ex}

            %    \begin{ex}[Zero-free in $\overline{\D}$]\label{ex8}
%Let $f(z)=2z+1$ and $g(z)=z^2-z-1$.  Since
%\begin{equation*}
 %   |\langle g,f\rangle|=3>1=|\langle g,zf\rangle|,
%\end{equation*}
%it follows from Theorem \ref{condition} that $q_{1,2}[f,g]$ is zero-free in $\overline{\D}$.  In fact, $q_{1,2}[f,g](z)=\frac{11}{21}z-\frac{17}{21}$.
%\end{ex}

%Given any functions $f,g\in L^2$, $f\not\equiv 0$, Theorem \ref{condition} makes it easy to determine whether or not the OPA $q_{1,2}[f,g]$ is zero-free in $\overline{\D}$; as we will see in the upcoming sections, this will give us insight into the zeros of certain OPAs in $L^p$. 

It's worth noting that the results in this section can be formulated in the context of $L^2(\mu)$, where $d\mu:=\frac{1}{2\pi}|f|^2dt$.  At any rate, in the case where $n$ is arbitrary, the following is still unknown.

\begin{OQ}\label{oq1}
Let $n\in\N$.  For which functions $f,g\in L^2$ can we guarantee that $q_{n,2}[f,g]$ is zero-free in $\overline{\D}$?
\end{OQ}

%%%%%%%%%%%%%%%%%%%%%%%%%%%%%%%%%%%%%%%%%%%%%%%%%%%%%%%%%%%%
\section{Orthogonality Condition}\label{integralcharacterization}

Our main motivation for writing this paper is to investigate the zeros of optimal polynomial approximants in the context of $H^p$.  For the Hilbert space $H^2$, we already know that OPAs of the form $q_{n,2}[f,1]$, where $f\in H^2$ and $f(0)\neq 0$, are zero-free in $\overline{\D}$; one proof of this follows from a special case of Proposition \ref{ortho}.  Now, in an effort to understand the zeros of OPAs in $H^p$, we generalize Proposition \ref{ortho} to functions in $L^p$.  As a result, this allows us to relate OPAs in $L^p$ to OPAs in the more structured space $L^2$.

Recall that in a normed space $(X,\|\cdot\|)$, an element $x\in X$ is said to be \textit{orthogonal} to a subspace $Y$ if $\|x\|\leq \|x+y\|$ for all $y\in Y$.  In the case where $Y$ is a subspace of $L^p$, $1<p<\infty$, Shapiro \cite[Theorem 4.2.1]{Hsh} characterized the functions $f\in L^p$ that are orthogonal to $Y$.  Now, it follows from the definition of OPA that $Qf-g$ is orthogonal to $f\Pn$ if and only if $Q=q_{n,p}[f,g]$.  This observation, along with Shapiro's result, allows us to generalize Proposition \ref{ortho}.  We refer to this generalization as the \textit{orthogonality condition in $L^p$}.

%For the Hilbert space $H^2$, we already know that OPAs of the form $q_{n,2}[f,1]$, where $f\in H^2$ and $f(0)\neq 0$, are zero-free in $\overline{\D}$.  The proof of this follows from a special case of Proposition \ref{ortho}.  Now, in an effort to understand the zeros of OPAs in $H^p$, we generalize Proposition \ref{ortho} to functions in $L^p$.  As a result, this allows us to relate OPAs in $L^p$ to OPAs in $L^2$.  Consequently, we can study the properties of OPAs in $L^p$, particularly the location of their zeros, by using the additional tools that the Hilbert space structure provides.  Needless to say, results in $H^p$ will follow as a special case of those in $L^p$.

%Recall that in a normed space $(X,\|\cdot\|)$, an element $x\in X$ is said to be orthogonal to a subspace $Y$ if $\|x\|\leq \|x+y\|$ for all $y\in Y$.  In the case of $L^p$, it follows from the definition of OPA that $Qf-g$ is orthogonal to $f\Pn$ if and only if $Q=q_{n,p}[f,g]$.  This observation allows us to generalize Proposition \ref{ortho}.  We refer to this generalization as the \textit{orthogonality condition in $L^p$}.

                \begin{prop}[Orthogonality Condition]\label{orthop}
Let $1<p<\infty$, $f,g\in L^p$, $f\not\equiv 0$, $n\in\N$, and $Q\in\Pn$.  Then
\begin{equation*}
    \frac{1}{2\pi}\int_{-\pi}^{\pi}|Qf-g|^{p-1}\sgn(Qf-g)e^{-ikt}\overline{f} dt=0
\end{equation*}
for $k=0,\dots,n$ if and only if $Q=q_{n,p}[f,g]$.
\end{prop}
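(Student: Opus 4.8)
The plan is to convert the geometric notion of orthogonality into the stated integral condition by means of Shapiro's theorem, and then to reduce orthogonality against the whole subspace $f\Pn$ to orthogonality against its natural spanning set $\{z^kf\}_{k=0}^n$. There are three links in the chain, and each is short once the previous one is in place.

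First I would record the reformulation of the OPA as an orthogonality statement. By definition, $Q=q_{n,p}[f,g]$ means $\|Qf-g\|_p\le\|qf-g\|_p$ for every $q\in\Pn$. Writing $qf-g=(Qf-g)+(q-Q)f$ and observing that $(q-Q)f$ ranges over all of $f\Pn$ as $q$ ranges over $\Pn$, this is precisely the assertion that $\|Qf-g\|_p\le\|(Qf-g)+y\|_p$ for all $y\in f\Pn$; that is, $Qf-g$ is orthogonal to $f\Pn$ in the sense recalled before the statement. Hence $Q=q_{n,p}[f,g]$ if and only if $Qf-g\perp f\Pn$.

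Next I would invoke Shapiro's characterization \cite[Theorem 4.2.1]{Hsh}: for $1<p<\infty$ and a subspace $Y\subseteq L^p$, an element $x\in L^p$ is orthogonal to $Y$ if and only if $\frac{1}{2\pi}\int_{-\pi}^{\pi}|x|^{p-1}\sgn(x)\,\overline{y}\,dt=0$ for all $y\in Y$. (This is the expected outcome of differentiating $t\mapsto\|x+ty\|_p^p$ in the real and imaginary directions, using Gateaux differentiability of the $L^p$-norm for $1<p<\infty$, with the convention $\sgn(x)=x/|x|$ off the zero set of $x$ and $\sgn(0)=0$.) Taking $x=Qf-g$ and $Y=f\Pn$ turns $Qf-g\perp f\Pn$ into
\[
\frac{1}{2\pi}\int_{-\pi}^{\pi}|Qf-g|^{p-1}\sgn(Qf-g)\,\overline{y}\,dt=0\qquad\text{for all }y\in f\Pn.
\]
Finally, since the functional $y\mapsto\int|Qf-g|^{p-1}\sgn(Qf-g)\,\overline{y}\,dt$ is conjugate-linear in $y$, it vanishes on $f\Pn=\linspan\{f,zf,\dots,z^nf\}$ exactly when it vanishes on each spanning element $z^kf$; as $\overline{z^kf}=e^{-ikt}\overline{f}$ on $\T$, this is precisely the stated system of $n+1$ equations, and assembling the three steps yields the equivalence.

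The step demanding the most care is the faithful use of Shapiro's theorem: one must confirm that the conjugation convention there matches the placement of $\overline{f}$ and the unconjugated $\sgn(Qf-g)$ in the statement—if Shapiro's formula instead carries the conjugate on $\sgn(x)$, one simply conjugates the entire identity, which is harmless since it is set equal to zero—and that the almost-everywhere definition of $\sgn$ on the zero set of $Qf-g$ causes no difficulty, because there the factor $|Qf-g|^{p-1}=0$ kills the integrand. The remaining manipulations are routine bookkeeping.
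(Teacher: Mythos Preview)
Your argument is correct and follows essentially the same route as the paper: both proofs identify $Q=q_{n,p}[f,g]$ with orthogonality of $Qf-g$ to $f\Pn$ and then invoke Shapiro's characterization \cite[Theorem 4.2.1]{Hsh} to obtain the integral condition. You are simply more explicit than the paper about two points it leaves implicit---the reduction to the spanning set $\{z^kf\}_{k=0}^n$ and the conjugation bookkeeping (the paper states Shapiro's formula with $\overline{\sgn(F)}\,qf$ and then says ``this is equivalent to our assertion,'' which is exactly your observation that one may conjugate the identity since it equals zero).
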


\begin{proof}
Let $F:=Qf-g$. From \cite[Theorem 4.2.1]{Hsh},
\begin{equation}\label{sgneq}
    \frac{1}{2\pi}\int_{-\pi}^{\pi}|F|^{p-1}\overline{\sgn(F)}qf dt=0
\end{equation}
for all $q\in\Pn$ if and only if $F$ is orthogonal to $f\Pn$.  From our previous discussion, it follows that (\ref{sgneq}) is true for all $q\in\Pn$ if and only if $Q=q_{n,p}[f,g]$.  This is equivalent to our assertion.
\end{proof}

%One of the primary reasons Proposition \ref{ortho} is useful is that it allows us to easily compute the coefficients of OPAs in $L^2$.  In fact, we can express the coefficients as the solution to a system of linear equations for which the associated matrix can easily be inverted.  Since Proposition \ref{orthop} is a generalization of Proposition \ref{ortho}, it's natural to wonder if Proposition \ref{orthop} can be used to readily compute the coefficients of OPAs in $L^p$.  By imposing certain conditions on $f$ and $p$, we will see in Section \ref{comp} that this is the case. 

%On a different note, Proposition \ref{orthop} can give us insight into OPAs of the form $q_{n,p}[f,1]$, where $2\leq p<\infty$.  Before we look into the details, consider the following remark.

One of the primary reasons Proposition \ref{ortho} is useful is that it allows us to easily compute the coefficients of OPAs in $L^2$; since Proposition \ref{orthop} is a generalization of Proposition \ref{ortho}, it's natural to wonder if Proposition \ref{orthop} can be used to readily compute the coefficients of OPAs in $L^p$.  By imposing certain conditions on $f$ and $p$, we will see in Section \ref{comp} that this is the case.  

On a different note, Proposition \ref{orthop} can give us insight into OPAs of the form $q_{n,p}[f,1]$, where $2\leq p<\infty$.  Before we look into the details, consider the following remark.

\begin{rem}\label{remark}
Let $2\leq p<\infty$, $f\in L^p\setminus\{0\}$, and $n\in\N$.  Define the function $g:=\{q_{n,p}[f,1]f-1\}^{\frac{p-2}{2}}$.  If $g\equiv 0$, then $q_{n,p}[f,1]$ is zero-free on $\T$.  Moreover, if $f\in H^p$ and $g\equiv 0$, then $q_{n,p}[f,1]$ is zero-free in $\overline{\D}$.
\end{rem}

We will reference this remark when we study the location of zeros in the next section.  Nevertheless, perhaps the most valuable insight into the zeros of OPAs can be obtained by the following theorem.

                \begin{thm}\label{cor4}
Let $2\leq p<\infty$, $f\in L^p\setminus\{0\}$, $n\in\N$, and $g:=\{q_{n,p}[f,1]f-1\}^{\frac{p-2}{2}}$.  If $g\not\equiv 0$, then
\begin{equation*}
    q_{n,p}[f,1]=q_{n,2}[fg,g].
\end{equation*}
\end{thm}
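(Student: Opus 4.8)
The plan is to recognize each OPA through its own orthogonality condition and then observe that the two conditions literally coincide. Write $Q := q_{n,p}[f,1]$ and $F := Qf-1$, so that by hypothesis $g = F^{\frac{p-2}{2}}$ and, in particular, $|g|^2 = |F|^{p-2}$. By Proposition \ref{orthop} applied to the pair $(f,1)$, the polynomial $Q=q_{n,p}[f,1]$ is characterized by
\[
\frac{1}{2\pi}\int_{-\pi}^{\pi}|F|^{p-1}\sgn(F)\,e^{-ikt}\overline{f}\,dt=0,\qquad k=0,\dots,n.
\]
Since $\sgn(F)=F/|F|$, the integrand may be rewritten as $|F|^{p-2}F\,e^{-ikt}\overline{f}$, and I would keep this reformulated identity in hand for the comparison below.

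Next I would write down the $L^2$ condition from Proposition \ref{ortho}: a polynomial $\widetilde{Q}\in\Pn$ equals $q_{n,2}[fg,g]$ if and only if $\langle \widetilde{Q}\,fg-g,\;z^k fg\rangle=0$ for $k=0,\dots,n$. Testing this with $\widetilde{Q}=Q$ and using $Q\,fg-g=g(Qf-1)=gF$ together with $\overline{z^k fg}=e^{-ikt}\overline{f}\,\overline{g}$ on $\T$, the inner product becomes
\[
\frac{1}{2\pi}\int_{-\pi}^{\pi}gF\,e^{-ikt}\overline{f}\,\overline{g}\,dt
=\frac{1}{2\pi}\int_{-\pi}^{\pi}|g|^2F\,e^{-ikt}\overline{f}\,dt
=\frac{1}{2\pi}\int_{-\pi}^{\pi}|F|^{p-2}F\,e^{-ikt}\overline{f}\,dt,
\]
where the final equality uses $|g|^2=|F|^{p-2}$. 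This is exactly the quantity shown to vanish in the previous step, so the $L^2$-orthogonality relations hold for every $k=0,\dots,n$, and Proposition \ref{ortho} then yields $Q=q_{n,2}[fg,g]$, as desired.

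Before invoking Proposition \ref{ortho} I would do the bookkeeping that places us genuinely in $L^2$. Since $\|g\|_2^2=\frac{1}{2\pi}\int|F|^{p-2}\,dt$ and $F\in L^p$ on a finite measure space with $p-2\leq p$, we get $g\in L^2$; and since $\|fg\|_2^2=\frac{1}{2\pi}\int|f|^2|F|^{p-2}\,dt$, I would apply H\"older's inequality with the conjugate exponents $\tfrac{p}{2}$ and $\tfrac{p}{p-2}$ to bound this by $\|f\|_p^2\,\|F\|_p^{p-2}<\infty$, so $fg\in L^2$ as well. I expect the only genuinely delicate points to be (i) confirming $fg\not\equiv 0$, so that $q_{n,2}[fg,g]$ is even defined; this is where the hypothesis $g\not\equiv 0$ is used, and it is automatic once $f\neq 0$ a.e.\ (in particular for $f\in H^p$, where the boundary zero set has measure zero); and (ii) noting that the branch chosen in forming $g=F^{\frac{p-2}{2}}$ is immaterial, since the computation uses only the real quantity $|g|^2=g\overline{g}=|F|^{p-2}$ and never the branch itself. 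Once these are dispatched, the heart of the argument is the single substitution $g\overline{g}F=|F|^{p-2}F=|F|^{p-1}\sgn(F)$ that matches the two orthogonality conditions term by term.
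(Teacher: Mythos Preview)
Your argument is correct and follows essentially the same route as the paper: both proofs start from the $L^p$ orthogonality condition of Proposition~\ref{orthop}, rewrite $|F|^{p-1}\sgn(F)$ as $|F|^{p-2}F = g\overline{g}\,F$, recognize the resulting integrals as the inner products $\langle Qfg-g,\,z^kfg\rangle$, and conclude via Proposition~\ref{ortho}. Your additional bookkeeping (the H\"older estimate for $fg\in L^2$, the remark that only $|g|^2$ matters so branch choices are irrelevant, and the observation that $fg\not\equiv 0$ requires $f\neq 0$ a.e.) goes beyond what the paper writes out, and your caution about the last point is warranted: for general $f\in L^p$ the hypothesis $g\not\equiv 0$ does not by itself force $fg\not\equiv 0$, so the right-hand side may fail to be well defined in edge cases that the paper does not address.
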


\begin{proof}
Let $Q:=q_{n,p}[f,1]$.  By Proposition \ref{orthop}, we have that 
\begin{equation*}
    \frac{1}{2\pi}\int_{-\pi}^{\pi}|Qf-1|^{p-1}\sgn (Qf-1)e^{-ikt}\overline{f}dt=0
\end{equation*}
for $k=0,\dots,n$.  Now, we can write
\begin{align*}
   \frac{1}{2\pi}\int_{-\pi}^{\pi}|Qf-1|&^{p-1}\sgn (Qf-1)e^{-ikt}\overline{f}dt\\
   &=\frac{1}{2\pi}\int_{-\pi}^{\pi}|Qf-1|^{p-2}(Qf-1)e^{-ikt}\overline{f}dt\\
   &=\frac{1}{2\pi}\int_{-\pi}^{\pi}(Qf-1)^{\frac{p-2}{2}}(\overline{Qf}-1)^{\frac{p-2}{2}}(Qf-1)e^{-ikt}\overline{f}dt\\
   &=\frac{1}{2\pi}\int_{-\pi}^{\pi}g(Qf-1)e^{-ikt}\overline{fg}dt\\
   &=\langle Qfg-g,z^kfg\rangle.
\end{align*}
Thus, $\langle Qfg-g,z^kfg\rangle=0$ for $k=0,\dots,n$.  By Proposition \ref{ortho}, we conclude that $Q=q_{n,2}[fg,g]$.
\end{proof}

%The beauty of this theorem lies in its ability to relate OPAs in the Banach space $L^p$ to OPAs in the Hilbert space $L^2$.  Consequently, 

Theorem \ref{cor4} suggests that we can gain a better understanding of OPAs in $L^p$ by studying OPAs of the form $q_{n,2}[fg,g]$, where $f$ and $g$ are arbitrary functions in $L^2$ such that $fg\in L^2\setminus\{0\}$; this amounts to studying the unique polynomial that minimizes the norm $\|\{qf-1\}g\|_2$, where $q$ varies over $\mathcal{P}_n$.  As a particular point of interest, we would like to know the necessary conditions to impose on $g$ in order for $q_{n,2}[fg,g]$ to be zero-free in $\overline{\D}$.  The following example shows that $g$ cannot be arbitrary.

               \begin{ex}[Zero in $\D$]\label{4.5}
Let $f(z)=2z+1$ and $g(z)=1-z$.  Then $q_{1,2}[fg,g]=-\frac{6}{35}z-\frac{1}{35}$.
\end{ex}

                 %\begin{ex}[Zero on $\T$]\label{4.6}
%Let $f(z)=1-z$ and $g(z)=z+1$.  Then $q_{1,2}[fg,g](z)=\frac{1}{2}z+\frac{1}{2}$.
%\end{ex}

            %\begin{ex}[Zero-free in %$\overline{\D}$]\label{ex4.7}
%Let $f(z)=z-2$ and $g(z)=z-3$.  Then $q_{1,2}[fg,g](z)=-\frac{433}{2619}z-\frac{1216}{2619}$.
%\end{ex}

To better align with our main motivation, let's consider the case where $f\in H^p$.  If we assume that $2\leq p<\infty$, then it's easy to see that the function $g$ in Theorem \ref{cor4} belongs to $L^2$.  However, it's not at all clear if $g$ belongs to $H^2$.  Nevertheless, the following proposition shows that the OPA $q_{n,2}[fg,g]$ is equal to an OPA obtained by replacing $g$ with a particular function in $H^2$.

                             \begin{thm}\label{4.7}
Let $2\leq p<\infty$, $f\in H^p\setminus\{0\}$, $n\in\N$, and $g:=\{q_{n,p}[f,1]f-1\}^{\frac{p-2}{2}}$.  If $g\not\equiv 0$, then there exists a function $h\in H^2$, which is zero-free in $\D$, such that 
\begin{equation*}
    q_{n,p}[f,1]=q_{n,2}[fh,h].
\end{equation*}
\end{thm}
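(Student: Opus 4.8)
The plan is to convert the $L^2$-weighted OPA from Theorem \ref{cor4} into an OPA with an $H^2$ weight by exploiting the inner-outer factorization of the weight function $g$. By Theorem \ref{cor4}, we already know that $q_{n,p}[f,1]=q_{n,2}[fg,g]$, where $g:=\{q_{n,p}[f,1]f-1\}^{\frac{p-2}{2}}\in L^2\setminus\{0\}$. The key observation is that the OPA $q_{n,2}[fg,g]$ is, by definition, the unique minimizer of $\|qfg-g\|_2=\|(qf-1)g\|_2$ over $q\in\Pn$, and this quantity is a weighted $L^2$-norm with weight $|g|^2$. Since replacing $g$ by any function $h$ with $|h|=|g|$ a.e.\ on $\T$ leaves the weight $|g|^2=|h|^2$ unchanged, it leaves the minimization problem (and hence the minimizer) unchanged. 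Therefore the entire task reduces to producing a function $h\in H^2$, zero-free in $\D$, with $|h|=|g|$ a.e.\ on $\T$.

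First I would verify the integrability needed to build such an $h$. Because $f\in H^p$ and $q_{n,p}[f,1]\in\Pn\subset H^\infty$, we have $q_{n,p}[f,1]f-1\in H^p$, so $\log|g|=\frac{p-2}{2}\log|q_{n,p}[f,1]f-1|$ is (up to the constant factor) the boundary modulus of an $H^p$ function; one must check that $\log|g|\in L^1(\T)$. The standard tool here is that a nonzero $H^p$ function has $\log|\cdot|\in L^1(\T)$ on the boundary (its boundary values cannot vanish on a set of positive measure), which guarantees $\log|q_{n,p}[f,1]f-1|\in L^1(\T)$, provided $q_{n,p}[f,1]f-1\not\equiv 0$. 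This last nonvanishing is exactly the hypothesis $g\not\equiv 0$. With $\log|g|\in L^1(\T)$ in hand, I would then define $h$ to be the outer function with boundary modulus $|g|$, namely
\begin{equation*}
    h(z):=\exp\left(\frac{1}{2\pi}\int_{-\pi}^{\pi}\frac{e^{it}+z}{e^{it}-z}\log|g(e^{it})|\,dt\right).
\end{equation*}
By the classical theory of outer functions, $h\in H^2$ precisely because $|h|=|g|\in L^2(\T)$ (as $g\in L^2$), and outer functions are zero-free in $\D$ by construction.

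Having produced $h$, I would conclude as follows. Since $|h|=|g|$ a.e.\ on $\T$, for every $q\in\Pn$ we have
\begin{equation*}
    \|qfh-h\|_2^2=\frac{1}{2\pi}\int_{-\pi}^{\pi}|qf-1|^2|h|^2\,dt=\frac{1}{2\pi}\int_{-\pi}^{\pi}|qf-1|^2|g|^2\,dt=\|qfg-g\|_2^2.
\end{equation*}
Thus the two minimization problems have identical objective functions, so their unique minimizers coincide: $q_{n,2}[fh,h]=q_{n,2}[fg,g]$. Combining this with Theorem \ref{cor4} yields $q_{n,p}[f,1]=q_{n,2}[fh,h]$, which is the claim. (One should also note $fh\in L^2\setminus\{0\}$ so that $q_{n,2}[fh,h]$ is well defined; this follows since $|fh|=|fg|$ a.e.\ and $fg\in L^2\setminus\{0\}$.)

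The main obstacle is the verification that $\log|g|\in L^1(\T)$, i.e., that the outer function $h$ genuinely exists. This is where the hypotheses $f\in H^p$ (rather than merely $f\in L^p$) and $g\not\equiv 0$ are essential: analyticity forces the boundary function $q_{n,p}[f,1]f-1$ to be the trace of a nonzero $H^p$ function, and the $F.$ and $M.$ Riesz / Szeg\H{o} theory then forbids $\log|g|$ from being $-\infty$ on a positive-measure set. Everything after the construction of $h$ is the routine modulus-invariance argument above; the analytic input that makes the outer factor available is the crux of the proof.
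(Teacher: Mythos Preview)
Your proposal is correct and follows essentially the same route as the paper. The paper writes the inner--outer factorization $q_{n,p}[f,1]f-1=u\Phi$ and sets $h:=\Phi^{(p-2)/2}$, then uses $|g|=|h|$ a.e.\ on $\T$ to match the two $L^2$ minimization problems and invokes uniqueness together with Theorem~\ref{cor4}; your outer function with prescribed boundary modulus $|g|$ is the same $h$, and your modulus-invariance argument is exactly the paper's norm computation, with the added benefit that you spell out the log-integrability check and the well-definedness of $q_{n,2}[fh,h]$ that the paper leaves implicit.
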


\begin{proof}
Define the function $g:=\{q_{n,p}[f,1]f-1\}^{\frac{p-2}{2}}$.  Note that we can write $q_{n,p}[f,1]f-1=u\Phi$, where $u$ is an inner function and $\Phi$ is an outer function.  Let $h:=\Phi^{\frac{p-2}{2}}$.  Since $\Phi$ is zero-free in $\D$, it follows that $h\in H^2$.  Furthermore, $h$ is zero-free in $\D$.  Now, since $g=u^\frac{p-2}{2}h$, we see that
\begin{align*}
    \|q_{n,2}[fh,h]fg-g\|_2&=\|q_{n,2}[fh,h]fh-h\|_2\\
    &=\inf_{q\in\Pn}\|qfh-h\|_2\\
    &=\inf_{q\in\Pn}\|qfg-g\|_2\\
    &=\|q_{n,2}[fg,g]fg-g\|_2.
\end{align*}
By uniqueness of OPAs and Theorem \ref{cor4}, the result follows.

\end{proof}

Theorem \ref{4.7} indicates that it's worth studying OPAs of the form $q_{n,2}[fg,g]$, where $f$ and $g$ are arbitrary functions in $H^2$ such that $fg\in H^2\setminus\{0\}$.  Accordingly, we would like to address a modified version of Open Question \ref{oq1} .

\begin{OQ}
Let $n\in\N$.  For which functions $f,g\in H^2$ can we guarantee that $q_{n,2}[fg,g]$ is zero-free in $\overline{\D}$?
\end{OQ}

\section{Location of Zeros}\label{locationofzeros}
Our motivation to study the zeros of optimal polynomial approximants comes from the early application of PLSI polynomials to digital filter design.  One of the most important aspects to designing a recursive digital filter is ensuring filter stability; recall that a filter is stable if it corresponds to a system in which bounded input yields bounded output.  In 2D signal processing, a stable filter can be obtained by a rational function of two complex variables such that the denominator is zero-free in $\overline{\D^2}$.  In 1972, Justice, Shanks, and Treitel \cite{Sh} proposed that such a rational function can be created with the use of planar least-squares inverse polynomials.  Accordingly, they made the following conjecture:  for any polynomial $f(z,w)\not\equiv 0$, the associated PLSI polynomial is zero-free in $\overline{\D^2}$.  This conjecture became known in the engineering community as \textit{Shanks' Conjecture}.

In 1975, Genin and Kamp \cite{GK} gave a counterexample to Shanks' Conjecture.  In particular, they discovered a polynomial with zeros in $\D^2$  such that the associated PLSI polynomial has zeros in $\D^2$. A few years later, they developed a method \cite{GK1} to construct polynomials with which the associated PLSI polynomial is guaranteed to have zeros in $\D^2$.  At this point, it was unclear whether it was possible to impose conditions on the polynomial $f(z,w)$ to guarantee that the associated PLSI polynomial is zero-free in $\overline{\D^2}$.  In 1980, Delsarte, Genin, and Kamp \cite{DGK} made the following conjecture:  for any polynomial $f(z,w)$ that doesn't vanish in $\overline{\D^2}$, the associated PLSI polynomial is zero-free in $\overline{\D^2}$.  Although there have been several attempts to prove this conjecture, it is still unresolved.  This conjecture is referred to as \textit{Weak Shanks' Conjecture}.

Since our primary motivation for writing this paper is to understand the zeros of OPAs in the context of $H^p$, it's natural for us to wonder if ``Shanks-type'' results hold in this setting.  In fact, we already know that such a result holds in $H^2$.  More specifically, we know that the OPA $q_{n,2}[f,1]$ is zero-free in $\overline{\D}$ whenever $f\in H^2$ and $f(0)\neq 0$.  With this in mind, we can start by asking if the OPA $q_{n,p}[f,1]$ is zero-free in $\overline{\D}$ whenever $f\in H^p$ and $f(0)\neq 0$.  Moreover, we can ask if it's necessary to impose the condition $f(0)\neq 0$.  On a different note, if there are values of $n$ for which the OPAs have a zero in $\D$, then we should ask if there's a limit to how far in $\D$ the zeros can lie.  To begin our investigation, we establish the following proposition.

            \begin{prop}\label{propzero5}
Let $1<p<\infty$, $f\in H^p\setminus\{0\}$, and $n\in\N$.  If there exists some $z_0\in\D$ such that $q_{n,p}[f,1](z_0)=0$, then
\begin{equation*}
    \sqrt{1-\|q_{n,p}[f,1]f-1\|_p^p}\leq |z_0|.
\end{equation*}
\end{prop}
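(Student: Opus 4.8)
The plan is to convert the hypothesis that $Q:=q_{n,p}[f,1]$ vanishes at $z_0$ into a pointwise statement about the error function $F:=Qf-1$, and then to invoke the sharp growth estimate for $H^p$ functions. Since $Q$ is a polynomial (hence in $H^\infty$) and $f\in H^p$, we have $F\in H^p$. First I would record the two elementary facts that drive the argument. Because $Q(z_0)=0$, evaluating at $z_0\in\D$ gives $F(z_0)=Q(z_0)f(z_0)-1=-1$, so $|F(z_0)|=1$; this is the crucial link between the \emph{zero} of $Q$ and a pointwise \emph{value} of $F$. Second, because $Q$ minimizes $\|qf-1\|_p$ over $q\in\Pn$ while the admissible choice $q\equiv 0$ produces error $\|0\cdot f-1\|_p=\|1\|_p=1$, we have $\|F\|_p\le 1$. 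This last inequality guarantees $1-\|F\|_p^p\ge 0$, so the square root on the left-hand side of the claim is real.

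Next I would apply the sharp pointwise bound for $H^p$: for every $F\in H^p$ and every $z\in\D$,
\[
|F(z)|\le\frac{\|F\|_p}{(1-|z|^2)^{1/p}}.
\]
Taking $z=z_0$ and using $|F(z_0)|=1$ yields $(1-|z_0|^2)^{1/p}\le\|F\|_p$, that is, $1-|z_0|^2\le\|F\|_p^p=\|Qf-1\|_p^p$. Rearranging gives $1-\|Qf-1\|_p^p\le|z_0|^2$, and taking square roots (legitimate since the left side is nonnegative) produces the stated inequality.

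The one step that requires care, and the main obstacle, is the growth estimate with the \emph{sharp} denominator $(1-|z|^2)^{1/p}$. A naive route through subharmonicity of $|F|^p$ together with the sup-norm of the Poisson kernel only delivers the crude bound $|F(z)|^p\le\frac{1+|z|}{1-|z|}\|F\|_p^p$, which is too weak: it would give merely $\tfrac{1-|z_0|}{1+|z_0|}\le\|F\|_p^p$, and since $\tfrac{1-|z_0|}{1+|z_0|}\le 1-|z_0|^2$, this does not establish $1-|z_0|^2\le\|F\|_p^p$. To obtain the correct constant I would use the conformal invariance of the $H^p$ norm. With the disk automorphism $\psi(w)=\frac{w+z_0}{1+\overline{z_0}w}$, so that $\psi(0)=z_0$ and $\psi'(0)=1-|z_0|^2$, the function $G(w):=F(\psi(w))\,\psi'(w)^{1/p}$ lies in $H^p$ with $\|G\|_p=\|F\|_p$, and $|G(0)|=|F(z_0)|(1-|z_0|^2)^{1/p}$. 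The estimate then reduces to the trivial case $z=0$, namely $|G(0)|^p\le\frac{1}{2\pi}\int_{-\pi}^{\pi}|G|^p\,dt=\|G\|_p^p$, which is just the mean-value inequality for the subharmonic function $|G|^p$. Since this growth estimate is classical (see, e.g., \cite{Dp}), in the final write-up I may simply cite it; but the conformal-invariance derivation is what pins down the constant exactly and makes the bound $1-|z_0|^2\le\|F\|_p^p$ sharp.
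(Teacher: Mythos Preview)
Your proof is correct and is essentially the same as the paper's. The paper writes out the weighted composition operator $h\mapsto \psi\cdot (h\circ\varphi)$ with $\varphi$ a disk automorphism sending $0\mapsto z_0$ and $\psi=(\varphi')^{1/p}$, then applies subharmonicity of $|\psi C_\varphi F|^p$ at the origin followed by the change of variables $w=\varphi(z)$ to recover $\|F\|_p^p$; this is exactly your conformal-invariance derivation of the sharp growth bound $|F(z_0)|\le \|F\|_p/(1-|z_0|^2)^{1/p}$, just not packaged as a named inequality.
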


\begin{proof}
Let $Q:=q_{n,p}[f,1]$.  Suppose there exists some $z_0\in\D$ such that $Q(z_0)=0$.  Define the analytic functions
\begin{equation*}
    \varphi(z):=\frac{z_0-z}{1-\overline{z_0}z}\quad \text{and}\quad \psi(z):=\bigg\{\frac{1-|z_0|^2}{(1-\overline{z_0}z)^2}\bigg\}^{\frac{1}{p}}.
\end{equation*}
Let $\psi C_{\varphi}:H^p\rightarrow H^p$ denote the weighted composition operator $h(z)\mapsto\psi(z)h\big(\varphi(z)\big)$.  Since $\big|\psi C_{\varphi}(Qf-1)\big|^p$ is subharmonic in $\D$, it follows that 
\begin{align*}
    1-|z_0|^2&=\big|\psi C_{\varphi}(Qf-1)(0)\big|^p\\
    &\leq \frac{1}{2\pi i}\int_{\T}\Big|Q\Big(\frac{z_0-z}{1-\overline{z_0}z}\Big)f\Big(\frac{z_0-z}{1-\overline{z_0}z}\Big)-1\Big|^p\frac{1-|z_0|^2}{|1-\overline{z_0}z|^2}\frac{dz}{z}\\
    &=\frac{1}{2\pi i}\int_{\T}|Q(w)f(w)-1|^p\frac{dw}{w}\\
    &=\|Qf-1\|_p^p.
\end{align*}
Equivalently, we have that $\sqrt{1-\|Qf-1\|_p^p}\leq |z_0|$.
\end{proof}

Proposition \ref{propzero5} shows that if the OPA has a zero in $\D$ and the quantity $\|q_{n,p}[f,1]f-1\|_p$ is small, then we can expect the zero to be close to $\T$.  Now, it's still interesting to know which functions $f\in H^p$ correspond to a quantity $\|q_{n,p}[f,1]f-1\|_p$ that's close to $1$.  Of course, this quantity is equal to $1$ whenever $q_{n,p}[f,1]\equiv 0$.  In the case where $p=2$, it is well-known that $q_{n,p}[f,1]\equiv 0$ if and only if $f(0)=0$ (see, e.g., \cite{BKLSS}).  Interestingly enough, the following proposition shows that this is true for all $1<p<\infty$.

                      \begin{prop}\label{propzero}
Let $1<p<\infty$, $f\in H^p\setminus\{0\}$, and $n\in\N$.  The following are equivalent:
\begin{enumerate}[(i)]
    \item $f(0)=0$
    \item $q_{n,p}[f,1]\equiv 0$
    \item $q_{n,p}[f,1](0)=0$
\end{enumerate}
\end{prop}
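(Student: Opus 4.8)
The plan is to establish the chain (i) $\Leftrightarrow$ (ii), then (ii) $\Rightarrow$ (iii) (which is immediate), and finally (iii) $\Rightarrow$ (ii); together these give the full equivalence. Only the last implication requires a genuinely new idea, and I would route it through the quantitative zero-location estimate already proved in Proposition \ref{propzero5}.

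For (i) $\Leftrightarrow$ (ii) I would feed the candidate $Q \equiv 0$ into the orthogonality condition of Proposition \ref{orthop} with $g=1$. Here $Qf-1 \equiv -1$, so $|Qf-1|^{p-1}\sgn(Qf-1) \equiv -1$, and each orthogonality relation collapses to the vanishing of $\frac{1}{2\pi}\int_{-\pi}^{\pi} e^{-ikt}\overline{f}\,dt$, which is the conjugate of the $(-k)$-th Fourier coefficient of $f$. Because $f \in H^p$ carries no strictly negative Fourier modes, every condition with $k \ge 1$ holds automatically, while the $k=0$ condition reads $\overline{f(0)}=0$. Thus $Q \equiv 0$ meets the orthogonality condition exactly when $f(0)=0$, and Proposition \ref{orthop} translates this into $q_{n,p}[f,1] \equiv 0 \iff f(0)=0$. (As a sanity check, (i) $\Rightarrow$ (ii) can be reproved softly: when $f(0)=0$ the pointwise bound $|h(0)| \le \|h\|_p$ on $H^p$ gives $\|qf-1\|_p \ge |(qf-1)(0)| = 1$ for every $q \in \Pn$, so the zero polynomial is optimal.) The implication (ii) $\Rightarrow$ (iii) needs no argument, since the zero polynomial vanishes at the origin.

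The heart of the matter is (iii) $\Rightarrow$ (ii). Writing $Q := q_{n,p}[f,1]$ and assuming $Q(0)=0$, I would apply Proposition \ref{propzero5} with $z_0=0$ to obtain $\sqrt{1-\|Qf-1\|_p^p} \le 0$. Since $Q$ is optimal and the zero polynomial already yields $\|0\cdot f - 1\|_p = 1$, the minimal error satisfies $\|Qf-1\|_p \le 1$; hence the radicand is nonnegative and the inequality forces $\|Qf-1\|_p = 1$. But then the zero polynomial also attains the minimal error, so uniqueness of OPAs (Proposition \ref{Uniqueness}) yields $Q \equiv 0$, which is exactly (ii).

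The main obstacle is conceptual rather than computational: one must resist attacking (iii) head-on by perturbing $Q$ with constants, since that only regenerates the $k=0$ orthogonality relation, a relation satisfied by every OPA and hence blind to the distinction between $Q(0)=0$ and $f(0)\neq 0$. The decisive leverage is instead the global, subharmonicity-driven estimate of Proposition \ref{propzero5}: a zero sitting precisely at the center of $\D$ pins the error norm at its largest admissible value $1$, after which strict convexity collapses the approximant to $0$. The only technical point to watch is that the radical in Proposition \ref{propzero5} be well defined, which holds precisely because the optimal error can never exceed the value $1$ furnished by the zero polynomial.
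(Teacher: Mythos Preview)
Your proof is correct and uses the same two ingredients as the paper's: the orthogonality characterization (Proposition \ref{orthop}) and the subharmonicity bound underlying Proposition \ref{propzero5}. The organization differs only slightly---the paper proves (i) $\Rightarrow$ (ii) by a direct subharmonicity/mean-value estimate rather than via the ``only if'' direction of Proposition \ref{orthop}, and it redoes that same estimate for (iii) $\Rightarrow$ (ii) instead of citing Proposition \ref{propzero5}---but the underlying ideas are identical.
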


\begin{proof}
Let $q_{n,p}:=q_{n,p}[f,1]$.  We first suppose that $f(0)=0$.  Since $|q_{n,p}f-1|^p$ is subharmonic in $\D$, it follows that 
\begin{align*}
    1&=|q_{n,p}(0)f(0)-1|^p\\
    &\leq\frac{1}{2\pi}\int_{-\pi}^{\pi}|q_{n,p}f-1|^pdt\\
    &=\|q_{n,p}f-1\|_p^p.
\end{align*}
Since $\|q_{n,p}f-1\|_p\leq 1$, we have that $\|q_{n,p}f-1\|_p=1$.  By uniqueness of OPAs, it follows that $q_{n,p}\equiv 0$.  This shows that $(i)$ implies $(ii)$.

To prove that $(ii)$ implies $(iii)$ is trivial.  Now, suppose that $q_{n,p}(0)=0$.  In a similar argument as above, it follows that $q_{n,p}\equiv 0$.  Therefore, Proposition \ref{orthop} implies that
\begin{equation}\label{harm}
    \frac{1}{2\pi}\int_{-\pi}^{\pi}\overline{f(e^{it})}dt=0.
\end{equation}
By conjugating both sides of (\ref{harm}), we see that 
\begin{equation*}
    f(0)=\frac{1}{2\pi}\int_{-\pi}^{\pi}f(e^{it})dt=0.
\end{equation*}
This shows that $(iii)$ implies $(i)$.
\end{proof}

As a result of the previous two propositions, we are able to address the question about how far in $\D$ the zeros of OPAs can lie.

\begin{thm}\label{main}
Let $1<p<\infty$, $f\in H^p$, $f(0)\neq 0$, and $n\in\N$.  Then $q_{n,p}[f,1](z)$ is zero-free for $|z|<\sqrt{1-\|q_{0,p}[f,1]f-1\|_p^p}$. 
\end{thm}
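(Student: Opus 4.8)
The plan is to reduce everything to Proposition \ref{propzero5}, whose bound $\sqrt{1-\|q_{n,p}[f,1]f-1\|_p^p}\le|z_0|$ already controls the location of any zero $z_0\in\D$ of the $n$th OPA. The only discrepancy between that bound and the radius claimed here is that Proposition \ref{propzero5} involves the $n$th residual $\|q_{n,p}[f,1]f-1\|_p$, whereas the theorem's radius is built from the $0$th residual $\|q_{0,p}[f,1]f-1\|_p$. Thus the entire content of the proof is the comparison of these two residuals, after which the conclusion is immediate.

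First I would record the nesting $\mathcal{P}_0\subseteq\mathcal{P}_n$. Since the $n$th residual is an infimum over the larger class,
\[
\|q_{n,p}[f,1]f-1\|_p=\inf_{q\in\mathcal{P}_n}\|qf-1\|_p\le\inf_{q\in\mathcal{P}_0}\|qf-1\|_p=\|q_{0,p}[f,1]f-1\|_p.
\]
Because the map $x\mapsto\sqrt{1-x^p}$ is decreasing on $[0,1]$, this inequality flips under the radical, giving
\[
\sqrt{1-\|q_{0,p}[f,1]f-1\|_p^p}\le\sqrt{1-\|q_{n,p}[f,1]f-1\|_p^p}.
\]
Now I would argue by contraposition: if $q_{n,p}[f,1]$ had a zero $z_0\in\D$ with $|z_0|<\sqrt{1-\|q_{0,p}[f,1]f-1\|_p^p}$, then chaining the last display with the bound of Proposition \ref{propzero5} would force $|z_0|<|z_0|$, a contradiction. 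Hence $q_{n,p}[f,1]$ is zero-free on that disk.

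The only points needing care are bookkeeping rather than genuine obstacles. To see that the radical is defined, I would note that $q\equiv 0\in\mathcal{P}_0$ yields $\|q_{0,p}[f,1]f-1\|_p\le\|1\|_p=1$, so the quantity under the root is nonnegative. To see that the radius is actually positive (so the statement is not vacuous), I would invoke $f(0)\neq 0$: by Proposition \ref{propzero} this gives $q_{0,p}[f,1]\not\equiv 0$, and then uniqueness of the best constant (Proposition \ref{Uniqueness}) forces $\|q_{0,p}[f,1]f-1\|_p<\|0\cdot f-1\|_p=1$, whence the radius is strictly positive. I expect no serious difficulty here; all the analytic weight of the argument sits in Proposition \ref{propzero5}, and what remains is the elementary monotonicity-plus-nesting observation.
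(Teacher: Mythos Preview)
Your proposal is correct and follows essentially the same approach as the paper: both use the monotonicity of the residuals $\|q_{k,p}[f,1]f-1\|_p$ in $k$ (equivalently, the nesting $\mathcal P_0\subseteq\mathcal P_n$) to pass from Proposition~\ref{propzero5}'s bound involving the $n$th residual to the one involving the $0$th residual, and both invoke Proposition~\ref{propzero} to ensure the radius is positive. Your additional bookkeeping that the radical is well defined (via $\|q_{0,p}[f,1]f-1\|_p\le 1$) and the explicit use of uniqueness to get the strict inequality are nice touches the paper leaves implicit.
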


\begin{proof}
Note that since $f(0)\neq 0$, it follows from Proposition \ref{propzero} that $\|q_{0,p}[f,1]f-1\|_p<1$, i.e., $\big\{z\in\C\colon |z|<\sqrt{1-\|q_{0,p}[f,1]f-1\|_p^p}\big\}$ is nonempty.  Now, suppose $q_{n,p}[f,1](z_0)=0$ for some $z_0\in\D$.  As a result of Proposition \ref{propzero5} and the fact that $\big\{\|q_{k,p}[f,1]f-1\|_p\big\}_{k=0}^{\infty}$ is a decreasing sequence of nonnegative numbers, we conclude that
\begin{align*}
    \sqrt{1-\|q_{0,p}[f,1]f-1\|_p^p}&=\sqrt{1-\sup_{k\in\N}\|q_{k,p}[f,1]f-1\|_p^p}\\
    &=\inf_{k\in\N}\sqrt{1-\|q_{k,p}[f,1]f-1\|_p^p}\\
    &\leq \sqrt{1-\|q_{n,p}[f,1]f-1\|_p^p}\\
    &\leq |z_0|.
\end{align*}
Therefore, $q_{n,p}[f,1](z)$ is zero-free for $|z|<\sqrt{1-\|q_{0,p}[f,1]f-1\|_p^p}$.
\end{proof}

For a function $f\in H^p$ with $f(0)\neq 0$, Theorem \ref{main} gives us considerable insight into the zeros of $q_{n,p}[f,1]$.  In fact, it provides us with the radius of a disk in which $q_{n,p}[f,1]$ is zero-free for every $n\in\N$.  Now, the existence of this disk is partially due to Proposition \ref{propzero}:  since $f(0)\neq 0$, we have that $q_{0,p}[f,1]\not\equiv 0$ and hence $\sqrt{1-\|q_{0,p}[f,1]f-1\|_p^p}>0$.  Nonetheless, the following result (which was noted in \cite{BKLSS} for a particular class of weighted Hilbert spaces of analytic functions) demonstrates another interesting consequence of Proposition \ref{propzero}. 
%For a function $f\in H^p$ with $f(0)\neq 0$, Theorem \ref{main} gives us considerable insight into the zeros of $q_{n,p}[f,1]$.  In fact, it provides us with the radius of a disk in which $q_{n,p}[f,1]$ is zero-free for every $n\in\N$.  Now, the existence of this disk is partially due to Proposition \ref{propzero}:  If there happened to be a function $f\in H^p$ such that $f(0)\neq 0$ and $q_{0,p}[f,1]\equiv 0$, then the set $\big\{z\in\C\colon |z|<\sqrt{1-\|q_{0,p}[f,1]f-1\|_p^p}\big\}$ would be empty.  Nonetheless, the following result demonstrates another interesting consequence of Proposition \ref{propzero}. 

                 \begin{prop}\label{lem}
Let $1<p<\infty$.  For each $n\in\N\setminus\{0\}$, let 
\begin{equation*}
    M_n:=\inf \{|z|\colon q_{n,p}[f,1](z)=0\ \text{for at least one}\ f\in H^p\ \text{with}\ f(0)\neq 0\}.
\end{equation*}
Then $M_1\leq M_n$.
\end{prop}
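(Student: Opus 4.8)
The plan is to show that every zero of an $n$th OPA $q_{n,p}[f,1]$ (with $f(0)\neq 0$) is simultaneously a zero of some \emph{first} OPA $q_{1,p}[\tilde f,1]$ arising from an admissible datum $\tilde f$ with $\tilde f(0)\neq 0$; this inclusion of zero-sets forces $M_1\leq M_n$ once we take infima. Concretely, I would fix $f\in H^p$ with $f(0)\neq 0$ and a point $z_0$ with $Q_n(z_0)=0$, writing $Q_n:=q_{n,p}[f,1]$. By Proposition \ref{propzero}, the hypothesis $f(0)\neq 0$ guarantees $Q_n\not\equiv 0$ and $Q_n(0)\neq 0$; in particular $z_0\neq 0$, and $Q_n$ is a genuine nonzero polynomial with finitely many roots. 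Since $z_0$ is a root, I factor $Q_n(z)=(z-z_0)s(z)$ with $s\in\mathcal{P}_{n-1}$, and the identity $Q_n(0)=-z_0\,s(0)\neq 0$ forces $s(0)\neq 0$.

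The core of the argument is the degree-reduction construction $\tilde f:=sf$, which lies in $H^p$ (a polynomial is a bounded multiplier, so $\mathcal{P}_{n-1}\cdot H^p\subseteq H^p$) and satisfies $\tilde f(0)=s(0)f(0)\neq 0$, hence qualifies as an admissible datum in the definition of $M_1$. Setting $\ell(z):=z-z_0\in\mathcal{P}_1$, I record the key identity $\ell\tilde f-1=(z-z_0)sf-1=Q_nf-1$, which gives $\|\ell\tilde f-1\|_p=\|Q_nf-1\|_p=\inf_{q\in\mathcal{P}_n}\|qf-1\|_p$. I then claim $\ell=q_{1,p}[\tilde f,1]$: for any $q\in\mathcal{P}_1$ the product $qs$ lies in $\mathcal{P}_n$, so $\|q\tilde f-1\|_p=\|(qs)f-1\|_p\geq\inf_{\tilde q\in\mathcal{P}_n}\|\tilde q f-1\|_p=\|\ell\tilde f-1\|_p$. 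Thus $\ell$ minimizes $\|q\tilde f-1\|_p$ over $\mathcal{P}_1$, and by uniqueness of OPAs (Proposition \ref{Uniqueness}) it equals $q_{1,p}[\tilde f,1]$. Since $\ell(z_0)=0$, the point $z_0$ is a zero of the first OPA $q_{1,p}[\tilde f,1]$ with $\tilde f(0)\neq 0$, so $|z_0|$ belongs to the set defining $M_1$ and therefore $M_1\leq|z_0|$.

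Finally, since $z_0$ ranged over an arbitrary zero of an arbitrary $n$th OPA with $f(0)\neq 0$, the inequality $M_1\leq|z_0|$ holds for every element of the set defining $M_n$; taking the infimum over that set yields $M_1\leq M_n$. I do not expect a genuine conceptual obstacle here: the essential move is peeling off the linear factor $z-z_0$ and absorbing the leftover degree-$(n-1)$ factor $s$ into the data, which collapses the degree-$n$ extremal problem to a degree-one problem while exactly preserving the extremal value. The only points requiring care are bookkeeping ones, namely verifying $s(0)\neq 0$ (which is precisely what keeps $\tilde f(0)\neq 0$ and so keeps $\tilde f$ inside the admissible class) and confirming the degree bound $\deg(qs)\leq n$ that returns each competitor to $\mathcal{P}_n$; both follow from Proposition \ref{propzero} and the elementary factorization above.
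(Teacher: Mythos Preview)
Your proof is correct and follows essentially the same approach as the paper's: factor $Q_n(z)=(z-z_0)s(z)$, absorb $s$ into the data to form $\tilde f=sf$ (the paper writes this as $h:=Q_nf/(z-z_0)$), and show that $z-z_0$ is the first OPA for $\tilde f$ with $\tilde f(0)\neq 0$. The only cosmetic difference is that you argue the minimality of $\ell$ directly while the paper argues it by contradiction.
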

\begin{proof}
For each $n\in\N\setminus\{0\}$, define the set
\begin{equation*}
    E_n:=\{|z|\colon q_{n,p}[f,1](z)=0\ \text{for at least one}\ f\in H^p\ \text{with}\ f(0)\neq 0\}.
\end{equation*}
If $|z_0|\in E_n$, then there exists some $f\in H^p$, with $f(0)\neq 0$, such that \\$q_{n,p}[f,1](z_0) = 0$.  Define the function
\begin{equation*}
    h(z):=\frac{q_{n,p}[f,1](z)f(z)}{z-z_0},
\end{equation*}
and note that 
\begin{equation}
    \|q_{n,p}[f,1]f-1\|_{p}=\|(z-z_0)h-1\|_{p}.\label{eqlem}
\end{equation}
Now, suppose there exists some $q\in\mathcal{P}_1$ such that 
\begin{equation*}
    \|(z-z_0)h-1\|_{p}>\|qh-1\|_{p}.
\end{equation*}
From (\ref{eqlem}), it follows that 
\begin{equation*}
    \|q_{n,p}[f,1]f-1\|_{p}>\|qh-1\|_{p}=\bigg\|q\frac{q_{n,p}[f,1]}{z-z_0}f-1\bigg\|_{p},
\end{equation*}
a contradiction since $q\frac{q_{n,p}[f,1]}{z-z_0}$ is a polynomial of degree at most $n$.  We conclude that $q_{1,p}[h,1](z)=z-z_0$.  Now, since $f(0)\neq 0$, it follows from Proposition \ref{propzero} that $q_{n,p}[f,1](0)\neq 0$.  This implies that $h(0)\neq 0$.  We then see that $|z_0|\in E_1$, and hence $E_n\subset E_1$.  We conclude that $M_1\leq M_n$.
\end{proof}

In an effort to find a lower bound for the zeros of $q_{n,p}[f,1]$, where $f\in H^p$ and $f(0)\neq 0$, Proposition \ref{lem} suggests that we should focus our attention to the case where $n=1$.  Now, by making the additional assumption that $2\leq p<\infty$, we can gain insight into the zeros of all the OPAs by implementing the test from Theorem \ref{condition}.  With this in mind, we pose the following question.

      \begin{OQ}\label{oq3}
Let $2\leq p<\infty$, $f\in H^p$, $f(0)\neq 0$, $g:=\{q_{1,p}[f,1]f-1\}^{\frac{p-2}{2}}$, and $g\not\equiv 0$.  Is it true that $|\langle g,fg\rangle |>|\langle g, zfg\rangle |$?
\end{OQ}

Note that if $g\equiv 0$, then it follows from Remark \ref{remark} that $q_{1,p}[f,1]$ is zero-free in $\overline{\D}$.  Otherwise, it follows from Theorems \ref{condition} and \ref{cor4} that the OPA $q_{1,p}[f,1]$ is zero-free in $\overline{\D}$ if and only if $|\langle g,fg\rangle|>|\langle g, zfg\rangle|$.  Therefore, if the answer to Open Question \ref{oq3} is ``Yes'', then we could deduce from Propositions \ref{propzero} and \ref{lem} that $q_{n,p}[f,1]$ is zero-free in $\overline{\D}$ for every $n\in\N$ and every $f\in H^p$ with $f(0)\neq 0$.

Although many of our discussions have been focusing on functions in $H^p$, there are other functions in $L^p$ that are worthy of our attention, e.g., the modulus of any nontrivial function in $H^p$.  In the following proposition, we use Theorem \ref{condition} to gain insight into the zeros of OPAs associated with some of these functions.

                    \begin{prop}\label{abs}
Let $2\leq p<\infty$.  If $f$ is a real-valued nonnegative function in $L^p\setminus\{0\}$, then $q_{1,p}[f,1]$ is zero-free in $\overline{\D}$.
\end{prop}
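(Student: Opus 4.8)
The plan is to reduce the statement to the first-degree test of Theorem~\ref{condition} by way of Theorem~\ref{cor4}. Write $Q:=q_{1,p}[f,1]$ and $g:=\{Qf-1\}^{\frac{p-2}{2}}$, and set $w:=|Qf-1|^{p-2}f$, so that $w=|g|^2f\geq 0$ because $f\geq 0$. The decisive observation is that, since $f$ is real-valued, the two inner products appearing in the test collapse onto $w$: one finds $\langle g,fg\rangle=\frac{1}{2\pi}\int_{-\pi}^{\pi}|g|^2\overline{f}\,dt=\frac{1}{2\pi}\int_{-\pi}^{\pi}w\,dt$ and $\langle g,zfg\rangle=\frac{1}{2\pi}\int_{-\pi}^{\pi}|g|^2 f e^{-it}\,dt=\frac{1}{2\pi}\int_{-\pi}^{\pi}w e^{-it}\,dt$. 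Thus $\langle g,fg\rangle$ is a nonnegative real number equal to the total mass of $w$, while $\langle g,zfg\rangle$ is the first Fourier coefficient of that same mass.

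Assuming first the generic case $w\not\equiv 0$, I would check that $w\in L^1$ (H\"older, using $Qf-1\in L^p$ and $f\in L^p$ with $p\geq 2$), and that $w\not\equiv 0$ forces both $g\not\equiv 0$ and $fg\not\equiv 0$ (if either vanished identically then $w=f|g|^2\equiv 0$). These are precisely the hypotheses under which Theorem~\ref{cor4} gives $Q=q_{1,2}[fg,g]$ and Theorem~\ref{condition} applies to $q_{1,2}[fg,g]$. The required inequality $|\langle g,fg\rangle|>|\langle g,zfg\rangle|$ then amounts to the strict triangle inequality $|\frac{1}{2\pi}\int_{-\pi}^{\pi}w e^{-it}\,dt|<\frac{1}{2\pi}\int_{-\pi}^{\pi}w\,dt$ for a nonnegative, not-identically-zero weight $w$ integrated against the non-constant unimodular factor $e^{-it}$: equality would force $\arg(w e^{-it})=-t$ to be constant almost everywhere on $\{w>0\}$, which is impossible once that set has positive measure. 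Theorem~\ref{condition} then yields that $Q$ is zero-free in $\overline{\D}$.

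The main obstacle is the degenerate case $w\equiv 0$, which can occur only when $p>2$: there the test reads $0>0$ and the reduction breaks down, since $w\equiv 0$ is equivalent to $Qf=1$ almost everywhere on the positive-measure set $E:=\{e^{it}\colon f(e^{it})>0\}$. I would dispatch this case directly rather than through the test. On $E$ one has $Q(e^{it})=1/f(e^{it})>0$, so $\operatorname{Im}Q(e^{it})$ is a degree-one trigonometric polynomial vanishing on the positive-measure set $E$ and hence vanishing identically; writing $Q(z)=a_1z+a_0$, this forces $a_1=0$, so $Q\equiv a_0$ is a positive real constant and is therefore trivially zero-free in $\overline{\D}$. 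One can confirm via the orthogonality condition of Proposition~\ref{orthop} that such a constant genuinely is the OPA, so uniqueness leaves no gap. Combining the two cases completes the proof; the only delicate points are the integrability and nonvanishing bookkeeping that legitimizes the invocation of Theorems~\ref{cor4} and~\ref{condition}, and the trigonometric-polynomial argument that rules out interior zeros in the degenerate regime.
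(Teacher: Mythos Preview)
Your proof is correct and follows the same approach as the paper: reduce via Theorem~\ref{cor4} to the $L^2$ test of Theorem~\ref{condition}, then verify the strict inequality using nonnegativity of $f$ (your $|\widehat{w}(1)|<\widehat{w}(0)$ for the nonnegative weight $w=|g|^2f$ is exactly the paper's Cauchy--Schwarz step with $f^{1/2}$, written in different notation). Your case split on $w\equiv 0$ rather than $g\equiv 0$ is in fact slightly more careful, since it also covers the edge case $g\not\equiv 0$ but $fg\equiv 0$, and your trigonometric-polynomial argument in the degenerate case spells out a detail the paper leaves implicit; the closing remark about confirming via Proposition~\ref{orthop} is unnecessary, since $Q$ is the OPA by definition.
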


\begin{proof}
Let $g:=\{q_{1,p}[f,1]f-1\}^{\frac{p-2}{2}}$.  If $g\equiv 0$, then $q_{1,p}[f,1]$ must be a nonzero constant, i.e., $q_{1,p}[f,1]$ is zero-free in $\overline{\D}$.  If $g\not\equiv 0$, then it follows from Theorem \ref{cor4} that $q_{1,p}[f,1]=q_{1,2}[fg,g]$.  Since
\begin{align*}
    |\langle g,zfg\rangle|&=\big|\big\langle g,zf^{\frac{1}{2}}f^{\frac{1}{2}}g\big\rangle\big|\\
    &=\big|\big\langle gf^{\frac{1}{2}},zf^{\frac{1}{2}}g\big\rangle\big|\\
    &<\big\|gf^{\frac{1}{2}}\big\|_2^2\\
    &=\big|\big\langle 1,|g|^2f\big\rangle\big|\\
    &=|\langle g,fg\rangle|,
\end{align*}
we conclude from Theorem \ref{condition} that $q_{1,p}[f,1]$ is zero-free in $\overline{\D}$.
\end{proof}

\begin{ex}
Let $f(z)=|1-z|^2$.  From Proposition \ref{abs}, it follows that $q_{1,2}[f,1]$ is zero-free in $\overline{\D}$.  More specifically, since $f(z)=-z+2-z^{-1}$ on $\T$, we can easily show that $q_{1,2}[f,1](z)=\frac{1}{10}z+\frac{2}{5}$.
\end{ex}

As of this point in the paper, our computations of OPAs have only pertained to the case where $p=2$.  In the following section, we develop a method that allows us to compute the coefficients of OPAs for many different values of $p$; this method is based on the orthogonality condition that was presented in Section \ref{integralcharacterization}.

%%%%%%%%%%%%%%%%%%%%%%%%%%%%%%%%%%%%%%%%%%%%%%%%%%%%%%%%%%%%%%%
\section{Computation of Coefficients}\label{comp}

As we saw in Section \ref{sec3}, the coefficients of OPAs in $L^2$ can be computed by using Proposition \ref{ortho}.  In fact, these coefficients can be expressed as the solution to a system of linear equations for which the associated matrix can easily be inverted.  Now, recall that Proposition \ref{orthop} is a generalization of Proposition \ref{ortho} to the $L^p$ setting.  Therefore, one would hope that this result could be used to compute the coefficients of OPAs in $L^p$; after making restrictions on the functions and considering specific values of $p$, we find that we are able to solve for the coefficients numerically; in light of Proposition \ref{lem}, we will focus most of our computations on the case where $n=1$.

%However, for values of $p$ other than $2$, we see that the coefficients are expressed as the solution to a system of nonlinear equations -- a rather complicated one, indeed!  

Before we begin the numerical computations, let's consider an example in which $n=0$.

\begin{ex}
Let $f(z)=1-z$.  Then $q_{0,p}[f,1]\equiv\frac{1}{2}$ for all integers $p>1$.
\end{ex}     

\begin{proof}
Let $Q:=\frac{1}{2}$.  Note that 
\begin{align}\label{eq9}
    \frac{1}{2\pi}&\int_{-\pi}^{\pi}|Qf-1|^{p-1}\sgn(Qf-1)e^{-ikt}\overline{f}dt\nonumber\\
    &=\frac{1}{2\pi}\int_{-\pi}^{\pi}|Qf-1|^{p-2}(Qf-1)\overline{f}dt\nonumber\\
    &=\frac{1}{2\pi i}\int_{\T}\bigg|-\frac{1}{2}z-\frac{1}{2}\bigg|^{p-2}\bigg(-\frac{1}{2}z-\frac{1}{2}\bigg)(1-\overline{z})\frac{dz}{z} \nonumber\\
    &=\frac{1}{2\pi i}\int_{\T}\bigg(-\frac{1}{2}z-\frac{1}{2}\bigg)^{\frac{p-2}{2}}\bigg(-\frac{1}{2}\overline{z}-\frac{1}{2}\bigg)^{\frac{p-2}{2}}\bigg(-\frac{1}{2}z-\frac{1}{2}\bigg)(1-\overline{z})\frac{dz}{z}\nonumber\\ 
    &=\frac{1}{2\pi i}\int_{\T}\bigg(-\frac{1}{2}z-\frac{1}{2}\bigg)^{p-1}\frac{z-1}{z^{\frac{p}{2}}}\frac{dz}{z}\nonumber\\
    &=\bigg(-\frac{1}{2}\bigg)^{p-1}\frac{1}{2\pi i}\int_{\T}\Bigg[\sum_{j=0}^{p-1}\binom{p-1}{j}z^j\Bigg]\frac{z-1}{z^{\frac{p}{2}}}\frac{dz}{z}\nonumber\\
    &=\bigg(-\frac{1}{2}\bigg)^{p-1}\frac{1}{2\pi i}\sum_{j=0}^{p-1}\binom{p-1}{j}\bigg[\int_{\T}z^{j-(\frac{p-2}{2})}\frac{dz}{z}-\int_{\T}z^{j-\frac{p}{2}}\frac{dz}{z}\bigg].
\end{align}
If $p$ is odd, then each integral in (\ref{eq9}) is zero for every $j$.  In this case,
\begin{equation*}
    \frac{1}{2\pi}\int_{-\pi}^{\pi}|Qf-1|^{p-1}\sgn(Qf-1)e^{-ikt}\overline{f}dt=0. 
\end{equation*}
If $p$ is even, then the first integral in (\ref{eq9}) is nonzero only when $j=\frac{p-2}{2}$.  The second integral is nonzero only when $j=\frac{p}{2}$.  Since
\begin{equation*}
    \binom{p-1}{\frac{p-2}{2}}=\frac{(p-1)!}{\big(\frac{p}{2}\big)!\,\big(\frac{p-2}{2}\big)!}=\binom{p-1}{\frac{p}{2}},
\end{equation*}
it follows that 
\begin{equation*}
   \frac{1}{2\pi}\int_{-\pi}^{\pi}|Qf-1|^{p-1}\sgn(Qf-1)e^{-ikt}\overline{f}dt=0.
\end{equation*}
In either case, we conclude from Proposition \ref{orthop} that 
\begin{equation*}
 Q=q_{0,p}[f,1]\equiv\frac{1}{2}.   
\end{equation*}

\end{proof}
           
%We have shown that if $f(z)=1-z$ and $p$ is an integer greater than $1$, then $\frac{1}{2}$ minimizes the norm $\|af-1\|_p$, where $a$ varies over $\C$.  In Proposition \ref{uniquenesseg}, we showed that this is also true for $p=\infty$.  Now, it would be interesting to know if this is the case for all $1\leq p\leq\infty$.  As a general question, if $f$ is a given polynomial, is it true that the OPAs $q_{0,p}[f,1]$ are the same value for each $1<p<\infty$?  If so, does this value minimize the norm $\|af-1\|_p$ for $p=1$ and $p=\infty$, where $a$ varies over $\C$?

Let us now begin to formulate a general setting to carry out our computations; for simplicity, we will only consider OPAs of the form $q_{n,p}[f,1]$.  We will see that it works to our advantage to make certain assumptions on the function $f$.  One of our assumptions will be based on the observation that conjugating a function's Fourier coefficients does not change its norm.  More specifically, if $f$ belongs to $L^p$ and has Fourier series $\sum_{k=-\infty}^{\infty}c_kz^k$, then the function $\Tilde{f}$ with Fourier series $\sum_{k=-\infty}^{\infty}\overline{c}_kz^k$ belongs to $L^p$ and satisfies $\|\Tilde{f}\|_p=\|f\|_p$.  This allows us to deduce the following result.

                \begin{prop}\label{real}
Let $1<p<\infty$, $f\in L^p\setminus\{0\}$, and $n\in\N$.  If the Fourier coefficients of $f$ are all real, then the coefficients of $q_{n,p}[f,1]$ are all real.
\end{prop}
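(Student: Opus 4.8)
The plan is to exploit the Fourier-coefficient conjugation $f\mapsto\Tilde{f}$ introduced just above the statement, together with the uniqueness of the OPA from Proposition \ref{Uniqueness}. The crucial observation is that the hypothesis ``$f$ has all real Fourier coefficients'' is exactly the statement $\Tilde{f}=f$, i.e. the tilde operation fixes $f$. I will show that this forces the unique minimizer to be fixed by the tilde operation as well, which is precisely the assertion that its coefficients are real.

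First I would record the elementary algebraic properties of the map $f\mapsto\Tilde{f}$ at the level of Fourier coefficients. Writing $f=\sum_k c_k z^k$, the coefficients of $\Tilde{f}$ are $\overline{c_k}$; the map is additive, it satisfies $\widetilde{\alpha f}=\overline{\alpha}\Tilde{f}$, and it fixes the constant function $1$ (so $\Tilde{1}=1$). From the convolution formula for the Fourier coefficients of a product one checks directly that $\widetilde{qf}=\Tilde{q}\,\Tilde{f}$ whenever the relevant product is integrable; since $q$ will always be a polynomial, $qf\in L^p\subseteq L^1$ and integrability is never an issue. In particular, for any $q=\sum_{j=0}^n a_j z^j\in\Pn$ one has $\Tilde{q}(z)=\sum_{j=0}^n \overline{a_j}z^j\in\Pn$, so the tilde operation maps $\Pn$ into itself.

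Next, combining these properties with the norm-invariance $\|\Tilde{h}\|_p=\|h\|_p$ recorded in the text, I obtain, for every $q\in\Pn$,
\[
\|\Tilde{q}\,f-1\|_p=\|\Tilde{q}\,\Tilde{f}-1\|_p=\|\widetilde{qf-1}\|_p=\|qf-1\|_p,
\]
where the first equality uses $\Tilde{f}=f$ and the second uses multiplicativity, additivity, and $\Tilde{1}=1$. Applying this identity to $Q:=q_{n,p}[f,1]$ shows that $\Tilde{Q}\in\Pn$ achieves the same value $\|\Tilde{Q}f-1\|_p=\|Qf-1\|_p=\inf_{q\in\Pn}\|qf-1\|_p$, so $\Tilde{Q}$ is also a best approximation. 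By uniqueness of OPAs for $1<p<\infty$ (Proposition \ref{Uniqueness}), $\Tilde{Q}=Q$; comparing coefficients gives $\overline{a_j}=a_j$ for each $j$, i.e. all coefficients of $q_{n,p}[f,1]$ are real.

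I do not expect a genuine obstacle here: the only point requiring any care is the verification that $\widetilde{qf-1}=\Tilde{q}\,\Tilde{f}-1$, which reduces to the multiplicativity and additivity of the Fourier-coefficient conjugation and the identity $\Tilde{1}=1$. Everything else is a direct appeal to the norm-invariance quoted in the excerpt and to the uniqueness of the minimizer.
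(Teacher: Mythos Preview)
Your proof is correct and follows essentially the same route as the paper: both arguments use the norm-preserving Fourier-coefficient conjugation $h\mapsto\Tilde h$, the identity $\widetilde{q_{n,p}f-1}=\Tilde q_{n,p}\Tilde f-1=\Tilde q_{n,p}f-1$ (using $\Tilde f=f$), and then invoke uniqueness of the OPA to conclude $\Tilde q_{n,p}=q_{n,p}$. You simply spell out the multiplicativity and additivity of the tilde map in more detail than the paper does.
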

\begin{proof}
Let $q_{n,p}:=q_{n,p}[f,1]$. Then we see that 
\begin{align*}
    \inf_{q\in\Pn}\|qf-1\|_p&=\|q_{n,p}f-1\|_p\\
    &=\|(q_{n,p}f-1)^{\sim}\|_p\\
    &=\|\Tilde{q}_{n,p}f-1\|_p.
\end{align*}
By uniqueness of OPAs, we have that $q_{n,p}=\Tilde{q}_{n,p}$.
\end{proof}

In the computations that later follow, we will make the assumption that the Fourier coefficients of the function $f$ are all real.  Under this assumption, Proposition \ref{real} will drastically simplify the equations involved in solving for the OPA coefficients.  Nonetheless, the following theorem gives us a method for computing the coefficients of OPAs  whenever $f$ is a polynomial and $p$ is even.  

                \begin{theorem}\label{thmpoly}
Let $n,N\in\N$ and $Q\in\Pn$.  Suppose that $f$ is a polynomial of degree at most $N$ and $p\geq 2$ is even.  Define the polynomials $G(z):=Q(z)f(z)-1$, $R(z):=z^{n+N}\overline{G\big(\frac{1}{\overline{z}}\big)}$, and $P(z):=z^NG(z)^{\frac{p}{2}}R(z)^{\frac{p-2}{2}}\overline{f\big(\frac{1}{\overline{z}}\big)}$.  Then
\begin{equation*}
   \left.\frac{d^{k}}{dz^{k}}P(z)\right|_{z=0}=0
\end{equation*}
for $\frac{p}{2}(n+N)-n\leq k\leq\frac{p}{2}(n+N)$ if and only if $Q=q_{n,p}[f,1]$.
\end{theorem}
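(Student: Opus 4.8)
The plan is to transfer the orthogonality condition of Proposition \ref{orthop} into a statement about the Taylor coefficients of $P$ at the origin, by rewriting the defining integral as a contour integral over $\T$ and then invoking the Cauchy coefficient formula. First I would set $G := Qf - 1$ and exploit that $p$ is even exactly as in the proof of Theorem \ref{cor4}: since $\frac{p}{2}$ and $\frac{p-2}{2}$ are nonnegative integers, on $\T$ we have
\[
|G|^{p-1}\sgn(G) = |G|^{p-2}G = G^{\frac{p}{2}}\,\overline{G}^{\frac{p-2}{2}}.
\]
By Proposition \ref{orthop}, it therefore suffices to show that the conditions
\[
\frac{1}{2\pi}\int_{-\pi}^{\pi}G^{\frac{p}{2}}\,\overline{G}^{\frac{p-2}{2}}\,e^{-ikt}\,\overline{f}\,dt = 0,
\qquad k = 0,\dots,n,
\]
are equivalent to the vanishing of the prescribed derivatives of $P$.

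Next I would parametrize $\T$ by $z = e^{it}$, so that $dt = dz/(iz)$ and, crucially, $1/\overline{z} = z$ on $\T$. This is what lets me trade the antiholomorphic factors for polynomial ones: from $R(z) = z^{n+N}\overline{G(1/\overline{z})}$ one reads off $\overline{G} = z^{-(n+N)}R(z)$ on $\T$, while $\overline{f} = \overline{f(1/\overline{z})}$ on $\T$. Substituting these and collecting powers of $z$, the $k$th integral becomes
\[
\frac{1}{2\pi i}\int_{\T} P(z)\, z^{-m-1}\,dz,
\qquad m := \tfrac{p}{2}(n+N) - n + k,
\]
with $P(z) = z^N G(z)^{\frac{p}{2}} R(z)^{\frac{p-2}{2}}\,\overline{f(1/\overline{z})}$ exactly the function in the statement.

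The step I expect to demand the most care is the exponent bookkeeping. On the one hand I must check that the leading factor $z^N$ absorbs the pole of $\overline{f(1/\overline{z})} = \sum_{j} \overline{c_j} z^{-j}$, so that $z^N\overline{f(1/\overline{z})}$ is the reversed-conjugate polynomial of $f$ and hence $P$ is a genuine polynomial (this is what makes the coefficient-extraction argument legitimate). On the other hand I must verify that, after combining the $z^{-(n+N)\frac{p-2}{2}}$ coming from $\overline{G}^{\frac{p-2}{2}}$, the $z^{-k}$ from $e^{-ikt}$, the $z^{-N}$ recovered from $P$, and the $z^{-1}$ from $dt$, the surviving power is precisely $-m-1$ with $m = \frac{p}{2}(n+N)-n+k$; the identity $(n+N)\frac{p-2}{2} = \frac{p}{2}(n+N)-(n+N)$ is what makes this reduce cleanly.

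Finally, since $P$ is a polynomial, hence analytic across $\overline{\D}$, the Cauchy coefficient formula gives $\frac{1}{2\pi i}\int_{\T}P(z)z^{-m-1}\,dz = P^{(m)}(0)/m!$ for each $m \geq 0$, so the $k$th integral vanishes if and only if $\frac{d^{m}}{dz^{m}}P(z)\big|_{z=0} = 0$. As $k$ runs over $0,\dots,n$, the index $m = \frac{p}{2}(n+N)-n+k$ runs over $\frac{p}{2}(n+N)-n \leq m \leq \frac{p}{2}(n+N)$, which is precisely the range in the statement. Combining this chain of equivalences with Proposition \ref{orthop} yields the theorem.
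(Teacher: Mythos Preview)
Your proposal is correct and follows essentially the same approach as the paper's own proof: both rewrite $|G|^{p-1}\sgn(G)$ as $G^{p/2}\overline{G}^{(p-2)/2}$ on $\T$, convert the orthogonality integrals of Proposition~\ref{orthop} into contour integrals $\frac{1}{2\pi i}\int_{\T} P(z)\,z^{-m-1}\,dz$ via the substitutions $\overline{G}=z^{-(n+N)}R$ and $\overline{f}=\overline{f(1/\overline{z})}$, and then identify these with the Taylor coefficients of the polynomial $P$. Your exponent bookkeeping is correct and in fact more explicit than the paper's; the arguments are otherwise indistinguishable.
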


\begin{proof}
Note that 
\begin{align*}
    \frac{1}{2\pi}\int_{-\pi}^{\pi}|Qf-1|&^{p-1}\sgn(Qf-1)e^{-ikt}\overline{f}dt\\
    &=\frac{1}{2\pi}\int_{-\pi}^{\pi}|Qf-1|^{p-2}(Qf-1)e^{-ikt}\overline{f}dt\\&=\frac{1}{2\pi i}\int_{\T}\frac{(Qf-1)^{\frac{p}{2}}(\overline{Qf}-1)^{\frac{p-2}{2}}\overline{f}}{z^k}\frac{dz}{z}\\
    &=\frac{1}{2\pi i}\int_{\T}\frac{z^NG(z)^{\frac{p}{2}}R(z)^{\frac{p-2}{2}}\overline{f\big(\frac{1}{\overline{z}}\big)}}{z^{k+\frac{p}{2}(n+N)-n}}\frac{dz}{z}\\
    &=\frac{1}{2\pi i}\int_{\T}\frac{P(z)}{z^{k+\frac{p}{2}(n+N)-n}}\frac{dz}{z}.
\end{align*}
Therefore, it follows from Proposition \ref{orthop} that
\begin{equation}
   \frac{1}{2\pi i}\int_{\T}\frac{P(z)}{z^{k+\frac{p}{2}(n+N)-n}}\frac{dz}{z}=0\quad\text{for}\quad k=0,\dots,n \label{cauchy}
\end{equation}
if and only if $Q=q_{n,p}[f,1]$.  Since $P$ is a polynomial, we can easily see that (\ref{cauchy}) is equivalent to 
\begin{equation}\label{dereq}
   \left.\frac{d^{k}}{dz^{k}}P(z)\right|_{z=0}=0\quad \text{for}\quad \frac{p}{2}(n+N)-n\leq k\leq\frac{p}{2}(n+N).
\end{equation}
Hence, (\ref{dereq}) is true if and only if $Q=q_{n,p}[f,1]$.
\end{proof}

Let's have a look at an example to demonstrate this method.

%As indicated in our previous discussion, we will only consider the case where the coefficients of the polynomial $f$ are all real.  

                \begin{ex}\label{ex6.6}
Let $f(z)=1-z$; let $z_0$ be the zero of $q_{1,p}[f,1]$.
\begin{enumerate}[(i)]
    \item $q_{1,2}[f,1](z)\approx 0.3333333333z+0.6666666667,\quad z_0\approx -2.00000$\label{one}
    \item $q_{1,4}[f,1](z)\approx 0.3734388420z+0.6265611579, \quad z_0\approx -1.67781$\label{two}  
    \item $q_{1,6}[f,1](z)\approx 0.3964823122z+0.6035176878, \quad z_0\approx -1.52218 $\label{three}
    \item $q_{1,8}[f,1](z)\approx 0.4117075962z+0.5882924038, \quad z_0\approx -1.42891 $\label{four}
    \item $q_{1,10}[f,1](z)\approx 0.4226290585z+0.5773709415, \quad z_0\approx -1.36614 $\label{five}
    \item $q_{1,20}[f,1](z)\approx 0.4336619002z+0.5313377953, \quad z_0\approx -1.22524 $\label{six}
    \item $q_{1,30}[f,1](z)\approx 0.2117474393z+0.2701449218, \quad z_0\approx -1.27578 $\label{seven}
\end{enumerate}
\end{ex}

%\begin{center}
 %    \includegraphics[width=10cm]{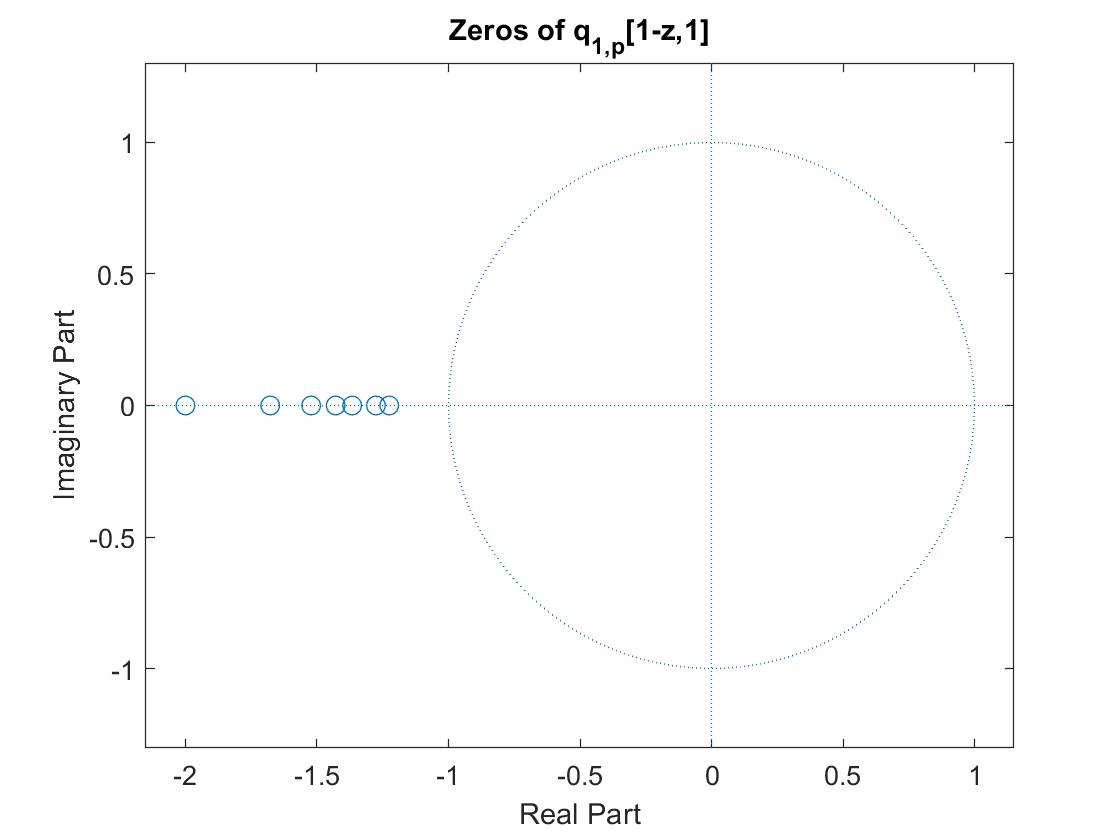}
%\end{center}

\begin{proof}
We only show the work for (\ref{two}).  However, note the similarity between the OPA in (\ref{one}) and the OPA in Example \ref{ex3.4}.  Let $Q:=q_{1,4}[f,1]$.  Define the polynomials $G(z)$, $R(z)$, and $P(z)$ as in Theorem \ref{thmpoly}.  By setting $q_{1,4}[f,1]=a_1z+a_0$, it is easy to see that
\begin{equation*}
    G(z)=-a_1z^2+(a_1-a_0)z+(a_0-1).
\end{equation*}
Moreover, we have that
\begin{equation*}
    z\overline{f\bigg(\frac{1}{\overline{z}}\bigg)}=z-1.
\end{equation*}
Now, it follows from Proposition \ref{real} that $a_0$ and $a_1$ are real.  Therefore,
\begin{align*}
    R(z)&=z^2\overline{G\bigg(\frac{1}{\overline{z}}\bigg)}\\
    &=z^2G\bigg(\frac{1}{z}\bigg)\\
    &=(a_0-1)z^2+(a_1-a_0)z-a_1.
\end{align*}
We thus find that 
\begin{equation*}
    P(z)=\big\{-a_1z^2+(a_1-a_0)z+(a_0-1)\big\}^2\big\{(a_0-1)z^2+(a_1-a_0)z-a_1\big\}(z-1).
\end{equation*}

To solve for $a_0$ and $a_1$, we set 
\begin{equation*}
   \left.\frac{d^{k}}{dz^{k}}P(z)\right|_{z=0}=0  
\end{equation*}
for $k=3$ and $k=4$.  This gives us the nonlinear equations

\begin{align*}
  -24a_1^3+78a_1^2a_0&-72a_1a_0^2+36a_0^3\\
  &-36a_1^2+60a_1a_0-54a_0^2-12a_1+30a_0-6=0  
\end{align*}
and
\begin{align*}
  144a_1^3-288a_1^2a_0&+312a_1a_0^2-96a_0^3\\
  &+72a_1^2-288a_1a_0+120a_0^2+96a_1-48a_0=0.  
\end{align*}
Through the use of numerical methods, we find that $a_1\approx 0.3734388420$ and $a_0\approx 0.6265611579$.

\end{proof}

\begin{center}
     \includegraphics[width=10cm]{Zeros_1.jpg}
\end{center}

In the above example, it seems plausible that the zeros of $q_{1,p}[f,1]$ remain outside of $\overline{\D}$ for all $p$.  Moreover, the zeros seem to converge as $p\rightarrow\infty$.  As a general question, for a polynomial $f$ with $f(0)\neq 0$, do the zeros of $q_{1,p}[f,1]$ converge as $p\rightarrow\infty$?  Under these conditions, it would be interesting to determine the smallest disk that contains all the zeros.  Perhaps this disk is disjoint from the closed unit disk $\overline{\D}$.

%%%%%%%%%%%%%%%%%%%%%%%%%%%%%%%%%%%%%%%%%%%%%%%%%%%%%%%%%
\section{Concluding Remarks}

The polynomials obtained by least-squares approximation have been a topic of interest to engineers and mathematicians over the last several decades.  In the engineering community, many have studied these polynomials in reference to the Hardy spaces $H^2(\D^2)$ and $H^2(\D)$.  In the mathematics community, many have studied these polynomials in reference to the Dirichlet-type spaces $\mathcal{D}_{\alpha}$, $\alpha\in\R$.  In this paper, we introduced an analogous version of these polynomials in the space $L^p$, $1\leq p\leq\infty$.  Our main motivation for doing so was to try and extend ``Shanks-type'' results to a larger collection of functions.  As a result of our many discussions, we are convinced that such a result holds for functions in $H^p$.  Accordingly, we make the following conjecture.

\begin{conj}\label{shanksconj}
Let $1<p<\infty$, $f\in H^p$, and $n\in\N$.  If $f(0)\neq 0$, then $q_{n,p}[f,1]$ is zero-free in $\overline{\D}$.
\end{conj}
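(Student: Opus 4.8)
The plan is to prove the conjecture by reducing it to the degree-one case and then to the inequality of Open Question \ref{oq3}. First I would invoke Proposition \ref{lem}: since $M_1\le M_n$ for every $n\ge 1$, it suffices to show that $q_{1,p}[f,1]$ is zero-free in $\overline{\D}$ for every $f\in H^p$ with $f(0)\neq 0$, the statement for all $n$ then following (together with Proposition \ref{propzero}, which guarantees $q_{0,p}[f,1]\not\equiv 0$ and keeps the relevant disk nontrivial). Fixing $n=1$, I would set $g:=\{q_{1,p}[f,1]f-1\}^{\frac{p-2}{2}}$ and split into two cases. If $g\equiv 0$, Remark \ref{remark} gives that $q_{1,p}[f,1]$ is zero-free in $\overline{\D}$ directly. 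If $g\not\equiv 0$, then Theorem \ref{cor4} identifies $q_{1,p}[f,1]=q_{1,2}[fg,g]$, and Theorem \ref{condition} reduces zero-freeness in $\overline{\D}$ to the single inequality $|\langle g,fg\rangle|>|\langle g,zfg\rangle|$, which is exactly Open Question \ref{oq3}. Thus the entire conjecture rests on establishing this one inequality.

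The next step is to recast the inequality in a form amenable to analysis. Writing $d\nu:=|g|^2\,\frac{dt}{2\pi}=|q_{1,p}[f,1]f-1|^{p-2}\,\frac{dt}{2\pi}$, a positive measure on $\T$, one computes $\langle g,fg\rangle=\int \overline{f}\,d\nu$ and $\langle g,zfg\rangle=\int \overline{z}\,\overline{f}\,d\nu$, so the desired inequality is equivalent to
\[
\Big|\int_{\T} f\,d\nu\Big| > \Big|\int_{\T} zf\,d\nu\Big|.
\]
This should be compared with the classical case $p=2$: there $\nu$ is normalized Lebesgue measure, $\int zf\,d\nu$ is the constant Fourier coefficient of $zf$ and hence vanishes because $f\in H^2$, while $\int f\,d\nu=f(0)\neq 0$, and the inequality is immediate. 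For $p>2$ the weight $|q_{1,p}[f,1]f-1|^{p-2}$ destroys this vanishing, and that is where the real work lies. To exploit analyticity I would pass, via Theorem \ref{4.7}, to the outer function $h\in H^2$ (zero-free in $\D$) with $|h|^2=|g|^2$ on $\T$, so that $fh,h\in H^2$ by H\"older's inequality, and rewrite the two moments as Hardy-space inner products $\langle fh,h\rangle$ and $\langle zfh,h\rangle=\langle fh,S^*h\rangle$, where $S^*$ denotes the backward shift. The target inequality becomes $|\langle fh,h\rangle|>|\langle fh,S^*h\rangle|$, and one would attempt to prove it by combining the $L^2(\nu)$-orthogonality relations $\int (q_{1,p}[f,1]f-1)\overline{z^k f}\,d\nu=0$, $k=0,1$, into a linear system that forces the coefficient $a_0$ of $q_{1,p}[f,1]$ to dominate $a_1$ in modulus.

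A complementary route would anchor at the known case $p=2$ and argue by continuity in $p$. For fixed $f$ (say a polynomial, so $f\in H^p$ for all $p$), the unique OPA coefficients, and hence the zero $z_0(p)$ of $q_{1,p}[f,1]$ when it has positive degree, depend continuously on $p$ by uniqueness and stability of best approximation in the strictly convex spaces $L^p$. Since $|z_0(2)|>1$ by the Chui--Chan result, it would suffice to show that $z_0(p)$ can never lie on $\T$; because $|z_0|=1$ corresponds exactly to the equality case $|\langle g,fg\rangle|=|\langle g,zfg\rangle|$ in Theorem \ref{condition}, this again routes back to the inequality above, now only in its non-degeneracy form.

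The main obstacle, in either route, is the self-referential nature of the weight: the measure $\nu$ (equivalently $g$ or $h$) is built from $q_{1,p}[f,1]f-1$, the very quantity whose optimal $q$ we are trying to control, so the clean Fourier-coefficient cancellation that settles $p=2$ is unavailable and the moments $\int f\,d\nu$, $\int zf\,d\nu$ cannot be read off directly. Breaking this circularity --- presumably by extracting enough from the orthogonality relations to bound the autocorrelations $\langle z^j h,h\rangle$ of the outer weight against its norm, or by a rigidity argument in $p$ --- is the crux, and is exactly why the statement is posed as a conjecture rather than proved.
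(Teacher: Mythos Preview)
The statement is a \emph{conjecture} in the paper; there is no proof to compare against. The paper explicitly presents it as unresolved and, in the paragraph following Open Question~\ref{oq3}, lays out precisely the reduction you describe: use Proposition~\ref{lem} to drop to $n=1$, split on whether $g\equiv 0$ via Remark~\ref{remark}, and otherwise invoke Theorems~\ref{cor4} and~\ref{condition} to reduce everything to the inequality $|\langle g,fg\rangle|>|\langle g,zfg\rangle|$. Your proposal is therefore not a new proof but a faithful (and slightly expanded) restatement of the paper's own heuristic roadmap, together with an honest acknowledgment that the crux---establishing that inequality---remains open.

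Two substantive gaps are worth flagging. First, the machinery of Theorem~\ref{cor4} and Remark~\ref{remark} requires $2\le p<\infty$, so your outline addresses only half of the conjectured range; for $1<p<2$ the exponent $\tfrac{p-2}{2}$ is negative and the weight $|q_{1,p}[f,1]f-1|^{p-2}$ need not lie in $L^1$, so the $L^2(\nu)$ reformulation breaks down and an entirely different argument would be needed. Second, the continuity-in-$p$ route is weaker than you suggest: even granting continuous dependence of the OPA coefficients on $p$, ruling out $|z_0(p)|=1$ for all $p$ does not by itself prevent the zero from escaping to infinity (degree dropping to zero) and re-entering inside $\overline{\D}$, nor does it pass from polynomial $f$ to general $f\in H^p$ without a further approximation argument. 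These are not errors so much as confirmations that the problem is genuinely open; your proposal correctly identifies where the difficulty lies but does not advance beyond the paper's own discussion.
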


For the case where $2\leq p<\infty$, we can approach this conjecture through the lens of the Hilbert space $L^2$; this is because we can express the OPA $q_{n,p}[f,1]$ in terms of an OPA in $L^2$.  In particular, one could formulate the theory through the use of Moore-Penrose inverses (see \cite{Iz} for a discussion in the $H^2$ setting).  More specifically, for any $f\in L^{\infty}\setminus\{0\}$, $g\in L^2$, and $n\in\N$, one could write $q_{n,2}[f,g]=(M_fE_n)^{\dagger}(g)$, where $M_f:L^2\rightarrow L^2$ is the multiplication operator defined by $h\mapsto fh$, $E_n:L^2\rightarrow\Pn$ is the orthogonal projection of $L^2$ onto $\Pn$, and $(M_fE_n)^{\dagger}$ is the Moore-Penrose inverse of $M_fE_n$.  Accordingly, one could take an operator theoretic approach toward gaining insight into the OPA.

\vspace{5mm}

\noindent
\textbf{Acknowledgments.}  I would like to thank Catherine B\'en\'eteau for the many enlightening conversations that we had about these topics.  Additionally, I would like to thank her, Dmitry Khavinson, and the referees for reviewing the draft of this paper and providing valuable feedback.  

%\vspace{5mm}

%\noindent
%\textbf{Conflict of Interest.}  The author declares that there is no conflict of interest in connection with this manuscript.
%\textit{The author declares no conflict of interest in connection with the publication of this article.}

%%%%%%%%%%%%%%%%%%%%%%%%%%%%%%%%%%%%%%%%%%%%%%%%%%%%%%%%%%%%%%

\end{document}